\newtheorem{thm}{Theorem}
\newtheorem{lem}{Lemma}[section]
\theoremstyle{definition}
\newtheorem{definition}[thm]{Definition}
\newtheorem{example}{Example}
\newtheorem{prop}{Proposition}[section]
\newtheorem{theorem}{Theorem}[section]
\newtheorem{lemma}{Lemma}%
\newcommand{\blem}{\begin{lemma}}
\newcommand{\elem}{\end{lemma}}
\newcommand{\bexer}{\begin{exe}}
\newcommand{\eexer}{\end{exe}}
\newcommand{\beq}{\begin{eqnarray}}
\newcommand{\eeq}{\end{eqnarray}}
\newcommand{\bthm}{\begin{theorem}}
\newcommand{\ethm}{\end{theorem}}
\newcommand{\bdefe}{\begin{definition}}
\newcommand{\edefe}{\end{definition}}
\newcommand{\bprop}{\begin{prop}}
\newcommand{\eprop}{\end{prop}}
\newcommand{\bpf}{\begin{proof}}
\newcommand{\epf}{\end{proof}}
\def\be{\begin{equation}}
\def\ee{\end{equation}}
\newcommand\Hyp{\mathrm{Hyp}}
\newcommand\Kob{\mathrm{Kob}}
\numberwithin{equation}{section}
\newcommand{\D}{\mathbb{D}}
\begin{document}

\author{M. Mateljevi\' c}
\address{Faculty of mathematics, University of Belgrade, Studentski Trg 16,
Belgrade, Yugoslavia} \email{miodrag@matf.bg.ac.rs}
\title[ Schwarz lemma] {Schwarz lemma and      Kobayashi Metrics for holomorphic and pluriharmonic functions}
\subjclass{Primary 30F45; Secondary 32G15}

\date{28 Dec,2016}

\keywords{harmonic and holomorphic  functions,hyperbolic distance and domains,the unit ball,the polydisc}

\maketitle
This is a working version. We will polish the text  as soon as possible.

\section{Introduction}

The Schwarz lemma as one of the most influential results in complex analysis and it has
a great impact to the development of several research fields, such as geometric function
theory, hyperbolic geometry, complex dynamical systems,
and theory of quasi-conformal mappings.
In  this note  we mainly  consider   various version  of Schwarz lemma and its relatives related to holomorphic  functions including several variables.

We use Ahlfors-Schwarz lemma
to give a  simple approach to Kalaj-Vuorinen results \cite{kavu} (shortly KV-results) and to put it  into a broader perspective.
But,  it turns out that our methods (results)   unify  very recent approaches   by D. Kalaj- M. Vuorinen, H. Chen, K. Dyakanov, D. Kalaj, M. Markovi\'c,  A. Khalfallah and P.Melentijevi\'c.\footnote{After writing  a version  of manuscript  \cite{MMSchw_Kob},   Petar Melentijevi\'c \cite{P_Mel} sent me his preprint (at 18 Jan 2017).
We learned from his  preprint about   a few related recent  results  \cite{hhChen,Dyak_Bloch,Kal_sch2,Mar_hyp}; we also found \cite{Khal} which may be related to our paper.}
We use several dimension version of Schwarz lemma (we call it the  geometric form  of  Kobayashi-Schwarz lemma, Theorem \ref{th:kob0}) to generalize these results to  several variables.
In particular our considerations include    domains on which we can compute  Kobayashi  distance, as   the unit ball, the polydisc,   the punctured disk  and the strip.
There is  a huge literature related to Schwarz  lemma (see for example H. Boas  \cite{Hboas}, R. Osserman  \cite{Oss}, D.M. Burns and S.G. Krantz \cite{BKr}, S.G. Krantz \cite{krantz3} and the literature cited there) and we apology if we did not mention some important papers.


\section{Schwarz lemma,background }\label{comp_geo}

Concerning material exposed in this section, we refer the interested reader to  well written    Krantz's paper   \cite{krantz2} that can provide greater detail.
Let  $D$ be  a domain in $z=x+iy$-plane and a
Riemannian metric be given by the fundamental form

\[ds^{2}=\sigma |dz|^2= \sigma(dx^{2}+dy^{2}) \]
which    is conformal with euclidian metric.
Often in the literature a Riemannian metric is given by $ds=\rho
|dz|$, $\rho > 0 $, that is
by the fundamental form
\[ds^{2}=\rho^{2}(dx^{2}+dy^{2})\,. \]
In some situations it is   convenient to   call  $\rho=\rho_D$ shortly
metric density.

For  $\mathbf{v}\in T_z \mathbb{C} $  vector, we  define     $|\mathbf{v}|_\rho=\rho(z) |\mathbf{v}|$.
If  $\gamma$ is  a  piecewise  smooth path in $D$, we    define   $|\gamma|_\rho= \int_{\gamma} \rho(z) |dz| $  and   $d_D(z_1,z_2)= \inf|\gamma|_\rho$, where the infimum is
taken over all paths $\gamma$ in  $D$   joining  the points
$z_1$ and   $z_2$.

For a hyperbolic plane domain $D$, we denote   respectively by $\lambda=\lambda_D$ (or if we wish to be more specific  by ${\rm  Hyp}_D$) and
$\delta_D$ (in some papers we use also notation $\sigma_D$)   the hyperbolic and pseudo-hyperbolic metric  on $D$
respectively.  By ${\rm  Hyp}_D(z)$ we also  denote the  hyperbolic density at $z\in D$.

For planar domains $G$ and $D$  we denote  by    ${\rm  Hol}(G,D)$  the class of all holomorphic mapping from $G$  into    $D$. For   complex Banach manifold   $X$  and   $Y$  we denote  by   $\mathcal{O}(X,Y)$  the class of all holomorphic mapping from $X$  into    $Y$.

If $f$ is a function on a set $X$ and   $x\in X $ sometimes we write   $fx$ instead of $f(x)$.
We write  $z=(z_1,z_2,...,z_n) \in  \mathbb{C}^n$.
On $\mathbb{C}^n$ we define the standard   Hermitian inner product  by
$<z,w>= \sum_{k=1}^n z_k \overline{w_k}$  for $z,w\in  \mathbb{C}^n$  and    by  $|z|= \sqrt{<z,z>}$ we denote the norm of vector $z$. We also use  notation  $(z,w)$ instead of  $<z,w>$ on some places.
By $\mathbb{B}=\mathbb{B}_n$ we denote the unit ball in  $\mathbb{C}^n$. In particular we use   also notation  $\mathbb{U}$  and  $\mathbb{D}$ for the unit disk in complex plane.
\bprop[classical Schwarz lemma 1-the unit disk]\label{:clschw}
Suppose that $f:\mathbb{D}\rightarrow \mathbb{D}$ is an analytic
map and  $ f(0)=0$. The classic Schwarz lemma states : $|f(z)|\leq
|z|$ and  $ |f'(0)| \leq 1$.
\eprop
It is interesting that this (at a first glance) simple result has far reaching applications and forms.

Define  $$T_{z_1}(z)= \frac{z-z_1}{1-\overline{z_1} z},$$    $\varphi_{z_1}
= -T_{z_1}$  and
$$\delta (z_1,z_2)=|T_{z_1}(z_2)| =|\frac{z-z_1}{1-\overline{z_1} z}|.$$

The classical Schwarz  lema  yields motivation to introduce hyperbolic  distance: If  $f\in {\rm  Hol} (\mathbb{U},\mathbb{U})$, then   $\delta(fz_1,fz_2)\leq  \delta (z_1,z_2)$.

Consider $F=\varphi_{w_1} \circ f \circ\varphi_{z_1}$, $w_k=f(z_k)$. Then  $F(0)=0$  and  $|\varphi_{w_1}(w_2)| \leq |\varphi_{z_1}(z_2)|$.

Hence
\begin{equation}\label{eq:clschw2}
|f'(z)| \leq \frac{1- |fz|^2}{1- |z|^2} .
\end{equation}
By  the  notation    $w=f(z)$ and  $dw=f'(z)dz$, we can rewrite   (\ref{eq:clschw2}) in the form

\begin{equation}\label{eq:clschw3}\frac{|dw|}{1- |w|^2} \leq \frac{|dz|}{1- |z|^2} .
\end{equation}
We can rewrite this inequality in vector form. Namely, define   the density   $\lambda(z)= \frac{1}{1- |z|^2}$.
For  $\mathbf{v}\in T_z \mathbb{C} $  vector we define     $|\mathbf{v}|_\lambda=\lambda(z) |\mathbf{v}|$
and set
$\mathbf{v}^*= df_z( \mathbf{v})$.  Hence, we can rewrite  (\ref{eq:clschw3}) in the form:   $|\mathbf{v}^*|_\lambda \leq |\mathbf{v}|_\lambda$.
Thus we have
\bprop[classical Schwarz lemma 2-the unit disk]\label{:clschw}
Suppose that $f:\mathbb{D}\rightarrow \mathbb{D}$ is an analytic
map. \\
(a) Then  (\ref{eq:clschw2}).\\
(b) If   $\mathbf{v}\in T_z \mathbb{C} $ and $\mathbf{v}^*= df_z( \mathbf{v})$,  then   $|\mathbf{v}^*|_\lambda \leq |\mathbf{v}|_\lambda$.
\eprop

If we choose  that  the hyperbolic density (metric) is given by
\begin{equation}\label{eq:hyp0}
{\rm  Hyp}_{\D}(z)=\frac{2}{1- |z|^2}\,,\quad z \in \D\,.
\end{equation}
then  the Gaussian curvature of  this metric is  $-1$.

We summarize
\beq\label{hyp.diUH}
\lambda_\mathbb{U} =\ln \frac{1+ \delta_\mathbb{U} }{1- \delta_\mathbb{U} }, \quad \lambda_\mathbb{H} =\ln \frac{1+ \delta_\mathbb{H}}{1- \delta_\mathbb{H}}\,.
\eeq



Let $G$ be a simply connected domain different from $\mathbb{C}$  and let $\phi:G \rightarrow \mathbb{U}$  be a conformal isomorphism. Define
$\varphi^G_a(z)= \varphi_b (\phi(z))$, where $b=\phi(a) $, and  the pseudo hyperbolic distance on $G$ by

 $\delta_G(a,z)=  |\varphi^G_a(z)|=  \delta_\mathbb{U}(\phi(a),\phi(z))$.
One can  verify that the pseudo hyperbolic distance on $G$ is independent of conformal mapping $\phi$.
In particular,   using conformal  isomorphism   $A(w)= A_{w_0}(w)= \frac{w-w_0}{w-\overline{w_0}}$ of $\mathbb{H}$ onto $\mathbb{U}$, we find  $\varphi_{H,w_0}(w)=A(w)$ and therefore
$\delta_H(w,w_0)= |A(w)|$.


For a domain $G$  in $\mathbb{C}$ and $z,z' \in G$ we define  $\underline{\delta}_G(z,z')=\sup  \delta_\mathbb{U}(\phi(z'),\phi(z)),$  and the Carath\'{e}odory
distance $c_G(z,z')=\sup  \delta_\mathbb{U}(\phi(z'),\phi(z)),$  where the suprimum is taken over all
 $\phi \in {\rm Hol}(G,\mathbb{U})$.
 Of course the Caratheodory distance can be trivial-for instance   if $G$   is the entire plane.

 On simply connected domains,   the pseudo-hyperbolic   distance  and  the hyperbolic   distance are related by
$$\delta= \tanh(\lambda/2)$$
and we have useful relation:\\
(I-1)   If  $G$ and $D$ are simply connected domains  domains   and  $f$  conformal mapping of $D$ onto $G$, then  ${\rm  Hyp}_G(fz)|f'(z)|={\rm  Hyp}_D(z)$.

The uniformization theorem says that every simply connected Riemann surface is conformally equivalent to one of the three Riemann surfaces: the open unit disk, the complex plane, or the Riemann sphere. In particular it implies that every Riemann surface admits a Riemannian metric of constant curvature. Every Riemann surface is the quotient of a free, proper and holomorphic action of a discrete group on its universal covering and this universal covering is holomorphically isomorphic (one also says: "conformally equivalent" or "biholomorphic") to one of the following:the Riemann sphere,the complex plane and   the unit disk in the complex plane.If the universal covering of a Riemann surface $S$  is  the unit disk we say that $S$ is hyperbolic.Using  holomorphic covering $\pi: \mathbb{U} \rightarrow S$, one can define the pseudo-hyperbolic  and  the hyperbolic metric on $S$.
In particular,  if  $S=G$   is hyperbolic planar domain  we can use\\
(I-2)    ${\rm  Hyp}_G(\pi z)|\pi'(z)|={\rm  Hyp}_{\D}(z)$    and \\
(I-3)   If  $G$ and $D$ are  hyperbolic domains   and  $f$  conformal mapping of $D$ onto $G$, then  ${\rm  Hyp}_G(fz)|f'(z)|={\rm  Hyp}_D(z)$.

\bprop [Schwarz lemma 1- planar hyperbolic domains]
(a)  If  $G$ and $D$ are conformally isomorphic to  $\mathbb{U}$   and   $f\in {\rm  Hol}(G,D)$, then
$$\delta_D(fz,fz')\leq \delta_G(z,z'), \quad z,z'\in G .$$

(b)  The result holds  more generally:  if  $G$ and $D$ are  hyperbolic domains  and  $f\in {\rm  Hol}(G,D)$, then
$$ {\rm  Hyp}_D(fz,fz')\leq {\rm  Hyp}_G(z,z'), \quad z,z'\in G . $$

(c) If  $z\in G$,  $\mathbf{v}\in T_z \mathbb{C} $ and   $\mathbf{v}^*= df_z( \mathbf{v})$,  then  $$|\mathbf{v}^*|_{{\rm  Hyp}} \leq |\mathbf{v}|_{{\rm  Hyp}}   .$$
\eprop
For a   hyperbolic planar domain $G$  the Carath\'{e}odory  distance  $C_G\leq \lambda_G$  with equality if and only if  $G$ is a simply connected domain.

(A)   holomorphic functions do not increase  the  corresponding hyperbolic distances between  the  corresponding hyperbolic domains.

The Caratheodory and Kobayashi metrics  have proved
to be important tools in the function theory of several complex
variables.   We can express   Kobayashi-Schwarz lemma in the geometric form (see Theorem \ref{th:kob0}, which is our main tool):
In particular,  we have:\\

(B) If $G_1$   and  $G_2$ are domains in $\mathbb{C}^n$  and $f: G_1\rightarrow G_2 $  holomorphic function,  then
$f$ does not increase  the  corresponding Caratheodory(Kobayashi)  distances.

But they are less familiar in the context of one complex
variable. Krantz \cite{krantz}
gathers in one place the basic
ideas  about these important invariant metrics for domains in the plane
and  provides some illuminating examples and applications.
We consider various generalization of this result including several variables.
There is an interesting connection between  hyperbolic geometry and  complex geometry.

(C) Pseudo-distances defined by pluriharmonic functions.

In \cite{Khal}  the author   constructs   $\alpha_{M,P}$, a new holomorphically invariant
pseudo-distance on a complex Banach manifold  $M$  using the set of real pluriharmonic functions on
$M$ with values in  $P$, a proper open interval of  $\mathbb{R}$.
It is well known that the Kobayashi pseudo-distance is the largest and
the Caratheodory pseudo-distance is the smallest one which can be assigned
to complex Banach manifolds by a Schwarz-Pick system. Therefore   $C_M\leq \alpha_{M,P}\leq \Kob_M$.

\section{Schwarz lemma for real harmonic  functions}\label{comp_geo}

Let  $\mathbb{S}= \mathbb{S}_0= \{w : |Re w|< 1\}$   and   $\mathbb{S}_1=  \{w : |Re w|< \pi/4\}$.
$\tan$ maps $\mathbb{S}_1$ onto  ${\mathbb D}$.  Let  $B(w)= \frac{\pi}{4} w$    and  $f_0=\tan \circ B $, ie.   $f_0(w)= \tan(\frac{\pi}{4} w) $. Then  $f_0$   maps $\mathbb{S}_0$ onto  ${\mathbb D}$.

Let  $r<1$,    $A_0(z)=\frac{1+z}{1-z}$,  and   let $\displaystyle \phi = i \frac{2}{\pi} {\mathrm {ln}} A_0;$.
 that is
$ \phi= \phi_0\circ A_0 $,  where
$\phi_0 = i \frac{2}{\pi} {\mathrm {ln}}$.
  Let $\hat{\phi}$ be defined by   $\hat{\phi} (z)=-\phi (iz)$. Note that    $\hat{\phi} =\frac{4}{\pi} \arctan $ is the inverse of $f_0$.
Hence  $\hat{\phi}'(0)=  \frac{4}{\pi}$ and  if
 $f$ is a conformal   map of $\D$ onto $\mathbb{S}_0$  with  $f(0)=0$, then  $|f'(0)= \frac{4}{\pi}$.

By the subordination principle,  \\
(II) If $f$ is an analytic  map of $\D$ into $\mathbb{S}_0$  with  $f(0)=0$, then  $|f'(0)\leq \frac{4}{\pi}$.

There is analogy of classical Schwarz lemma for harmonic maps.
\begin{lemma}[\cite{dur}]\label{Sc.Lemma1}
Let  $h: {\mathbb D}
\rightarrow \mathbb{S}_0$ be a harmonic mapping with $h(0)=0\,.$ Then
$|Re\, h(z)| \le \frac{4}{\pi} {\mathrm{tan}}^{-1}|z|$ and this
inequality is sharp for each point $z \in {\mathbb D} \,.$
\end{lemma}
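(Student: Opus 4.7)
The plan is to reduce the harmonic statement to the classical analytic Schwarz lemma together with a short pointwise estimate for $\re \arctan$ on $\D$. First I introduce $u=\re h$ together with its harmonic conjugate $v$ normalized by $v(0)=0$, obtaining a holomorphic $F=u+iv:\D\to \mathbb{S}_0$ with $F(0)=0$. Composing with the conformal map $f_0:\mathbb{S}_0\to\D$, $f_0(w)=\tan(\tfrac{\pi w}{4})$ (already recorded in the excerpt), produces $\omega=f_0\circ F:\D\to\D$ holomorphic with $\omega(0)=0$. The classical Schwarz lemma applied to $\omega$ gives $|\omega(z)|\le |z|$, while $F=\hat{\phi}\circ\omega$ yields
\[
u(z)=\tfrac{4}{\pi}\,\re\arctan\omega(z).
\]

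The proof thus reduces to the pointwise inequality $|\re \arctan w|\le \arctan |w|$ for $w\in\D$, which I regard as the main (and essentially only) computational step. Writing $w=a+ib$ with $a^2+b^2<1$ and using $\arctan w=\tfrac{1}{2i}\log\tfrac{1+iw}{1-iw}$, a direct computation of the argument of $\tfrac{1+iw}{1-iw}$ gives the identity
\[
\re \arctan w=\tfrac{1}{2}\arctan\tfrac{2a}{1-a^2-b^2}.
\]
Setting $r=|w|$ and invoking the double-angle identity $2\arctan r=\arctan\tfrac{2r}{1-r^2}$ on $[0,1)$, together with oddness and monotonicity of $\arctan$, the desired inequality collapses to the triviality $|a|\le \sqrt{a^2+b^2}$, with equality exactly when $b=0$. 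Combined with $|\omega(z)|\le |z|$ and monotonicity of $\arctan$ on $[0,1)$, this delivers $|u(z)|\le \tfrac{4}{\pi}\arctan|z|$.

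For sharpness at any prescribed $z_0\in\D\setminus\{0\}$, I would exhibit $h_{z_0}(z)=\tfrac{4}{\pi}\arctan(\lambda z)$ with $\lambda=\overline{z_0}/|z_0|$. This mapping is holomorphic (hence harmonic) on $\D$, carries $\D$ into $\mathbb{S}_0$, satisfies $h_{z_0}(0)=0$, and realizes equality $\re h_{z_0}(z_0)=\tfrac{4}{\pi}\arctan|z_0|$ because $\lambda z_0=|z_0|$ is a positive real. The only delicate point I anticipate is keeping track of the principal branch of $\arctan$ when computing the argument of $\tfrac{1+iw}{1-iw}$; aside from that I do not foresee genuine difficulties.
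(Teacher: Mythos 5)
Your argument is correct and complete: the passage to $F=u+iv$ with $v(0)=0$ is legitimate on the simply connected disk, $F$ lands in $\mathbb{S}_0$ precisely because $|\re F|=|u|<1$, the identity $\re\arctan w=\tfrac12\arctan\tfrac{2a}{1-a^2-b^2}$ is valid with the principal branch since $\re\tfrac{1+iw}{1-iw}>0$ on $\D$, and the reduction via $2\arctan r=\arctan\tfrac{2r}{1-r^2}$ to $|a|\le|w|$ is sound, as is the sharpness example $\tfrac{4}{\pi}\arctan(\lambda z)$. Note, however, that the paper itself offers no proof of this lemma: it is quoted from Duren's book, so there is no internal argument to compare against. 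Your route (harmonic conjugate, the strip map $\tan(\tfrac{\pi}{4}w)$, the classical Schwarz lemma, and the elementary inequality $|\re\arctan w|\le\arctan|w|$) is exactly in the spirit of what the paper does in the surrounding results, where an analytic completion with the same real part and the conformal/hyperbolic structure of $\mathbb{S}_0$ are the main tools (e.g.\ the proof sketch of the Kalaj--Vuorinen proposition and Theorem \ref{Schw_har0}); the classical treatment in \cite{dur} instead argues through the Poisson integral and harmonic measure of boundary arcs, which yields the same sharp bound but is less aligned with the hyperbolic-metric viewpoint adopted here. Your version has the advantage of being self-contained and of exhibiting the extremal configuration explicitly; the harmonic-measure proof has the advantage of identifying the extremal boundary data ($\pm1$ on complementary arcs) and of generalizing to bounds on $h$ itself rather than only $\re h$.
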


\begin{example}
 Let   $f_a(z)= \rm{Re} \hat{\phi}(z)  +i ay$  and   $g_a(z)= \rm{Re} \hat{\phi}(z)  +i a\rm{Im} \hat{\phi}(z)$. For  $a\in \mathbb{R}$, $f_a$ and  $g_a$ are  harmonic maps of $\D$ into $\mathbb{S}_0$  and $f_a(0)=g_a(0)=0$.
 Since   $|f_a'(0)|\geq |a|$  and  $L_{g_a}(0)=|a|\frac{4}{\pi}  $, there is no  reasonable estimate for  the distortion of harmonic  functions which maps  the unit disk into the strip.
It is interesting to note that  Lemma  \ref{Sc.Lemma1} shows that we can control the growth of the real part of  a harmonic mapping which maps   $\D$ into $\mathbb{S}_0$ and keeps the origin fixed.
\end{example}
 Let   $\lambda_0$  be   a  hyperbolic density on  $\mathbb{S}_0$. Then
\begin{equation}\label{eq:dhyp-strip0a}
    \lambda_{0}(w)= {\rm  Hyp}_{\mathbb{S}_0}(z)= \frac{\pi}{2}  \frac{1}{\cos (\frac{\pi}{2} u)}.
\end{equation}

If  $F$ is holomorphic map from  $\D$ into   $\mathbb{S}_0$, then by Ahlfors-Schwarz lemma

\begin{equation}\label{eq:Ah-Sch1a}
    \lambda_0 (F(z)) |F'(z)|\leq  2 (1- |z|^2)^{-1},\quad z\in \D.
\end{equation}
Using that $1-\cos (\frac{\pi}{2}x)= 2 \sin^2 (\frac{\pi}{4}x)$  and  the inequality   $ \frac{4}{\pi}t \leq \sqrt{2} \sin t$, $0\leq  t \leq \frac{\pi}{4}$, we prove
$\cos (\frac{\pi}{2}x) \leq   1-x^2$,  $|x|\leq 1$, and therefore
we get\\
(A1)     $ \frac{\pi}{2} (1-u^2)^{-1} \leq \lambda_{0}(w)$.
Hence, we get Kalaj-Vuorinen \cite{kavu}, see also \cite{hhChen}:
\begin{prop}
If  $f$ is harmonic map from $\D$  into $(-1,1)$ with $f(0)=0$, then
\begin{eqnarray}
  |\nabla f(z)| \leq \frac{4}{\pi}  \cos (\frac{\pi}{2}f(z)) (1- |z|^2)^{-1}, \quad z\in \D. \label{eq:dhyp-strip1a}\\
    |\nabla f (z) | \leq  \frac{4}{\pi} \frac{1- |f(z)|^2 }{1-|z|^2},   \quad |z|< 1. \label{eq:dhyp-disk1a}
\end{eqnarray}
\end{prop}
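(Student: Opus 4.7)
The plan is to lift $f$ to a holomorphic mapping into the strip and then invoke the Ahlfors--Schwarz inequality (\ref{eq:Ah-Sch1a}) already recorded above. Since $\mathbb{D}$ is simply connected and $f$ is a real-valued harmonic function with values in the interval $(-1,1)$, there is a holomorphic function $F:\mathbb{D}\to\mathbb{S}_0$ with $\re F=f$ (the harmonic conjugate $\tilde f$ is well-defined up to an additive real constant). Because $f$ takes values only in $(-1,1)$, we indeed have $|\re F(z)|<1$, i.e.\ $F(\mathbb{D})\subset \mathbb{S}_0$. The hypothesis $f(0)=0$ is not needed for either inequality, only that $F$ maps into the strip.

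Next I would compute $|\nabla f(z)|$ in terms of $|F'(z)|$. Writing $F=u+iv$ with $u=f$, the Cauchy--Riemann equations give $u_x=v_y$ and $u_y=-v_x$, so
\begin{equation*}
|\nabla f(z)|^2 = u_x^2+u_y^2 = u_x^2+v_x^2 = |F'(z)|^2.
\end{equation*}
So the problem reduces to estimating $|F'(z)|$. By the Ahlfors--Schwarz lemma (\ref{eq:Ah-Sch1a}) applied to $F:\mathbb{D}\to\mathbb{S}_0$,
\begin{equation*}
\lambda_0(F(z))\,|F'(z)|\leq \frac{2}{1-|z|^2},
\end{equation*}
and the explicit formula (\ref{eq:dhyp-strip0a}) for the hyperbolic density of the strip, $\lambda_0(w)=\tfrac{\pi}{2}\sec(\tfrac{\pi}{2}\re w)$, gives
\begin{equation*}
|F'(z)| \leq \frac{4}{\pi}\cos\!\bigl(\tfrac{\pi}{2} f(z)\bigr)\frac{1}{1-|z|^2}.
\end{equation*}
Combined with $|\nabla f|=|F'|$ this is the first claimed inequality.

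For the second inequality I would apply the elementary comparison (A1) proved just before the statement, namely $\cos(\tfrac{\pi}{2}x)\leq 1-x^2$ for $|x|\leq 1$, with $x=f(z)$. Substituting in the first inequality yields
\begin{equation*}
|\nabla f(z)|\leq \frac{4}{\pi}\,\frac{1-f(z)^2}{1-|z|^2},
\end{equation*}
which, since $f$ is real-valued and hence $|f(z)|^2=f(z)^2$, is precisely (\ref{eq:dhyp-disk1a}). The only mildly subtle point is the passage from a harmonic $f$ to the holomorphic $F$, but on the simply connected domain $\mathbb{D}$ this is routine; everything else is a direct substitution into results already established in the section, so I do not foresee a real obstacle.
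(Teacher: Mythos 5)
Your proof is correct and follows essentially the same route as the paper's own argument: pass to a holomorphic $F$ with $\re F=f$ mapping $\mathbb{D}$ into $\mathbb{S}_0$, apply the Ahlfors--Schwarz estimate (\ref{eq:Ah-Sch1a}) together with the density formula (\ref{eq:dhyp-strip0a}), and then use (A1) to get (\ref{eq:dhyp-disk1a}). The only (harmless) difference is that you observe the exact identity $|\nabla f|=|F'|$ for real-valued $f$, whereas the paper only records the inequality $|\nabla f|\leq |F'|$, and you correctly note that the normalization $f(0)=0$ is not actually needed.
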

\begin{proof}[Outline]
Let  $F$  be analytic  such that  $\Re f = \Re F$ on $\mathbb{D}$ with $F(0)=0$.  Then  $|\nabla f(z)|\leq |F'(z)|$, $z\in \D$.
By  Ahlfors-Schwarz   estimate  (\ref{eq:Ah-Sch1a}),
\[
    \lambda_0(f(z)) |\nabla f(z)| \leq  2 (1- |z|^2)^{-1}, \quad z\in \D.
\]
 Now an application of  the  formula  (\ref{eq:dhyp-strip0a})    yields  (\ref{eq:dhyp-strip1a}). By   (A1),   (\ref{eq:dhyp-disk1a}) follows from (\ref{eq:dhyp-strip1a}).
\end{proof}
By   (A1),   $\Hyp_{\mathbb{D}}(x_1,x_2) \leq \frac{4}{\pi} \Hyp_{S_0}(x_1,x_2)$,   $x_1,x_2 \in (-1,1)$.
If  $F$ is holomorphic map from  $\D$ into   $\mathbb{S}_0$  and $u=Re F$, then
$\Hyp_{S_0}(uz_1,u z_2)\leq \Hyp_{S_0}(Fz_1,F z_2)\leq  \Hyp_{\mathbb{D}}(z_1,z_2) $.

\begin{thm}[]\label{th:kavugen}
Suppose that  $D$ is a hyperbolic plane domain  $v : D \rightarrow (-1, 1)$ is real harmonic on
hyperbolic domain $D$. Then
$\Hyp_{\mathbb{D}}(v(z_1),v(z_2)\leq
\frac{4}{\pi}\Hyp_D(z_1,z_2)$.
\end{thm}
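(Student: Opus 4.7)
The plan is to bootstrap off the simply connected case, essentially the chain of inequalities displayed immediately before the theorem, which already gives $\Hyp_{\mathbb{D}}(v z_1,v z_2)\le \tfrac{4}{\pi}\Hyp_{\mathbb{D}}(z_1,z_2)$ whenever $v=\re F$ for some holomorphic $F\colon\mathbb{D}\to\mathbb{S}_0$. On a general hyperbolic $D$ such a global $F$ need not exist, so I would first pass to the universal holomorphic covering $\pi\colon\mathbb{D}\to D$ and work with $\tilde v:=v\circ\pi$. On $\mathbb{D}$, which is simply connected, $\tilde v$ is a real harmonic function valued in $(-1,1)$, so a harmonic conjugate exists and produces a holomorphic $F\colon\mathbb{D}\to\mathbb{S}_0$ with $\re F=\tilde v$.

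Next I would apply, in order, three facts already available in the excerpt. First, the Schwarz--Pick lemma for $F\colon\mathbb{D}\to\mathbb{S}_0$ gives $\Hyp_{\mathbb{S}_0}(F w_1,Fw_2)\le \Hyp_{\mathbb{D}}(w_1,w_2)$. Second, since $\Hyp_{\mathbb{S}_0}$ depends only on the real coordinate, projecting any path in $\mathbb{S}_0$ to the real axis does not increase its $\Hyp_{\mathbb{S}_0}$-length, so
\[
\Hyp_{\mathbb{S}_0}(\tilde v w_1,\tilde v w_2)\le \Hyp_{\mathbb{S}_0}(F w_1,F w_2).
\]
Third, the density inequality (A1) integrates to $\Hyp_{\mathbb{D}}(x_1,x_2)\le \tfrac{4}{\pi}\Hyp_{\mathbb{S}_0}(x_1,x_2)$ on $(-1,1)$. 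Chaining these yields
\[
\Hyp_{\mathbb{D}}(\tilde v w_1,\tilde v w_2)\le \tfrac{4}{\pi}\,\Hyp_{\mathbb{D}}(w_1,w_2),\qquad w_1,w_2\in\mathbb{D}.
\]

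Finally I would descend to $D$. Given $z_1,z_2\in D$, fix lifts $w_i\in\pi^{-1}(z_i)$; the left-hand side is exactly $\Hyp_{\mathbb{D}}(v(z_1),v(z_2))$ no matter which lifts are chosen, while taking the infimum over $w_2\in\pi^{-1}(z_2)$ with $w_1$ fixed turns the right-hand side into $\tfrac{4}{\pi}\Hyp_D(z_1,z_2)$, since $\pi$ is a local hyperbolic isometry by (I-2) and hence $\Hyp_D(z_1,z_2)=\inf_{w_2\in\pi^{-1}(z_2)}\Hyp_{\mathbb{D}}(w_1,w_2)$. The main obstacle is not any individual inequality (each is cited), but the very fact that $v$ on multiply connected $D$ has no global analytic completion; the covering trick is what bypasses this, reducing everything to the simply connected case where the sharp constant $4/\pi$ has already been extracted from the strip geometry.
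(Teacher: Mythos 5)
Your argument is correct and follows essentially the same route as the paper: reduce to the simply connected case by lifting $v$ through the universal covering $\pi\colon\mathbb{D}\to D$, take a holomorphic completion $F$ of the lift with values in $\mathbb{S}_0$, and chain the Schwarz--Pick inequality with the strip-versus-disk density comparison (A1), exactly as in the chain displayed before the theorem. The only difference is cosmetic: you make explicit the descent step $\Hyp_D(z_1,z_2)=\inf_{w_2\in\pi^{-1}(z_2)}\Hyp_{\mathbb{D}}(w_1,w_2)$ and the projection argument for $\Hyp_{\mathbb{S}_0}(\tilde v w_1,\tilde v w_2)\le \Hyp_{\mathbb{S}_0}(Fw_1,Fw_2)$, which the paper uses implicitly.
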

The author discussed  the results of these types with
SH. Chen,   S. Ponnusamy and  X. Wang, see also \cite{chma}.
\begin{proof}
If  $D$    is the unit disk  $\mathbb{D}$ this result follows  from  (\ref{eq:dhyp-disk1a}) and it   has been proved   by  Kalaj and Vuorinen  \cite{kavu}.

 In general case one can use a cover $\mathcal{P}:
\mathbb{D}\rightarrow D$ and define $\hat{v}= v\circ \mathcal{P}$.

For  $z,w \in D$, let     $z' \in  \mathcal{P}^{-1}(z)$ ,   $w' \in  \mathcal{P}^{-1}(w)$. Then

${\rm Hyp}_D (\hat{v}z', \hat{v}w') \leq \frac{4}{\pi} {\rm  Hyp}_{\mathbb{D}} (z',w')$.  Hence we get Theorem  \ref{th:kavugen}.
\end{proof}

If  $\Pi= \{w: {\rm Re} w>0 \}$, then $ \Hyp_{\Pi}(w)= \frac{1}{{\rm Re} w}$.

There is tightly   connection between harmonic and holomorphic functions.
 A few year ago I had in mind the following result:

 \begin{thm}\label{Schw_har0}
Suppose that  $D$ is a hyperbolic plane domain and $G=S(a,b)=(a,b)\times \mathbb{R}$,   $-\infty < a < b \leq \infty$,
plane domain  and  $f : D \rightarrow G$ is a  complex harmonic on
hyperbolic domain $D$.
Let $z\in D$,    $h\in T_z \mathbb{C}$, $|h|=1$,  and    $df_z(h)= \lambda v$, $\lambda >0$, $p=f(z)$ and $v \in T_{p} \mathbb{C}$.
If the measure of the angle between $v$ and $e_1=e_1(p)\in T_p C$,$p=f(z)$,is  $\alpha$,  then\\
{\rm(I)} $\lambda \cos \alpha \,\Hyp_G (f(z))  \leq   \Hyp_D(z)$.\\
{\rm(II)} If   $f$ is real valued,    then \\
$\lambda  \,\Hyp_G (fz)  \leq  \Hyp_D(z)$.\\
Hence
$\Hyp_G(f(z_1),f(z_2)\leq \Hyp_D(z_1,z_2)$.
\end{thm}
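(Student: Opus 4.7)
The plan is to exploit the translation invariance of $G=(a,b)\times\mathbb{R}$ in the imaginary direction: since $w\mapsto w+it$ is an automorphism of $G$ for every $t\in\mathbb{R}$, the hyperbolic density $\Hyp_G(w)$ depends only on $\mathrm{Re}\,w$. Writing $u=\mathrm{Re}\,f$, this lets me replace $\Hyp_G(f(z))$ by $\Hyp_G(F(z))$ for any holomorphic $F$ sharing the real part of $f$, and reduces everything to estimating $\partial_h u$, which equals $\mathrm{Re}(df_z(h))=\lambda\cos\alpha$.

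First I would handle $D=\mathbb{D}$. Because $\mathbb{D}$ is simply connected, $u$ admits a harmonic conjugate $\widetilde u$, so $F:=u+i\widetilde u$ is holomorphic from $\mathbb{D}$ into $G$. Applying the Schwarz--Pick inequality for holomorphic maps between hyperbolic planar domains (the Proposition ``Schwarz lemma 1'' of Section~2) to $F$ gives
\[
\Hyp_G(F(z))\,|F'(z)|\le\Hyp_{\mathbb{D}}(z).
\]
Since $|F'(z)|=|\nabla u(z)|\ge|\partial_h u(z)|$ for any Euclidean unit $h$ and $|\partial_h u(z)|=\lambda|\cos\alpha|$, while $\Hyp_G(F(z))=\Hyp_G(f(z))$ by the translation invariance observed above, one obtains
\[
\lambda\cos\alpha\cdot\Hyp_G(f(z))\le\Hyp_{\mathbb{D}}(z),
\]
which is (I) on the disk.

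For a general hyperbolic $D$, I pull back by the holomorphic universal covering $\pi:\mathbb{D}\to D$, exactly as in the proof of Theorem~\ref{th:kavugen}. Put $\widehat f=f\circ\pi$ and choose $\widehat z\in\pi^{-1}(z)$. Unwinding the $1$-dimensional complex chain rule, there is a Euclidean unit vector $\widehat h$ at $\widehat z$ with $d\pi_{\widehat z}(\widehat h)=|\pi'(\widehat z)|\,h$, and then $d\widehat f_{\widehat z}(\widehat h)=|\pi'(\widehat z)|\,df_z(h)=|\pi'(\widehat z)|\lambda v$ has the same direction $v$ (same angle $\alpha$) as $df_z(h)$. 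The disk case applied to $\widehat f$ gives
\[
|\pi'(\widehat z)|\,\lambda\cos\alpha\cdot\Hyp_G(f(z))\le\Hyp_{\mathbb{D}}(\widehat z),
\]
and the identity $\Hyp_{\mathbb{D}}(\widehat z)=|\pi'(\widehat z)|\,\Hyp_D(z)$ from (I-2) cancels the factor $|\pi'(\widehat z)|$ to produce (I) for $D$.

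Part (II) then drops out: when $f$ is real-valued, $df_z(h)\in\mathbb{R}$ forces $v=\pm e_1$, hence $|\cos\alpha|=1$, so (I) becomes $\lambda\,\Hyp_G(f(z))\le\Hyp_D(z)$. For the distance estimate, for any piecewise smooth $\gamma:[0,1]\to D$ joining $z_1$ and $z_2$, applying the pointwise version (II) at $\gamma(t)$ with $h=\gamma'(t)/|\gamma'(t)|$ yields
\[
\Hyp_G(f(\gamma(t)))\,|(f\circ\gamma)'(t)|\le\Hyp_D(\gamma(t))\,|\gamma'(t)|,
\]
so the $\Hyp_G$-length of $f\circ\gamma$ is at most the $\Hyp_D$-length of $\gamma$; an infimum over $\gamma$ gives $\Hyp_G(f(z_1),f(z_2))\le\Hyp_D(z_1,z_2)$. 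The main technical nuisance I anticipate is simply the careful bookkeeping through the universal cover under the Euclidean normalization $|h|=1$; once the harmonic-conjugate construction on $\mathbb{D}$ is in place, Ahlfors--Schwarz does all of the analytic work.
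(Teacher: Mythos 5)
Your proposal is correct and follows essentially the same route as the paper's proof: complete $u=\mathrm{Re}\,f$ to a holomorphic $F$ with $\mathrm{Re}\,F=\mathrm{Re}\,f$, use $\lambda\cos\alpha=du_z(h)\le|F'(z)|$ together with the Schwarz--Pick estimate $\Hyp_G(F(z))|F'(z)|\le\Hyp_{\mathbb{D}}(z)$ and the vertical translation invariance $\Hyp_G(f(z))=\Hyp_G(F(z))$, then pass to general $D$ by the universal cover as in Theorem \ref{th:kavugen} and obtain (II) from (I) with $\alpha=0$. Your write-up merely supplies details the paper leaves implicit (the bookkeeping through $\pi$ via (I-2) and the path-integration giving the distance inequality), so no further comparison is needed.
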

In the case $D=\D $, {\rm(II)}  is proved   for   $G=S(-1,1)$  in   \cite{kavu}, and    for   $G=S(0,\infty)$  in \cite{Mar_hyp}.
\begin{proof} First suppose that $D=\D$. In general case one can use a cover $\mathcal{P}:\mathbb{D}\rightarrow D$ as in the proof of Theorem  \ref{th:kavugen}.
Let $f=u+iv$, $F+U+iV$  be analytic  such that  $\Re f = \Re F$ on $\mathbb{D}$.
Let  $h\in T_z C$, $|h|=1$,  and    $df_z(h)= \lambda \mathbf{v}$, $\lambda >0$ and $\mathbf{v} \in T_{fz} X$.
If the measure of the angle between $v$ and   $e_1\in T_p C$,$p=f(z)$, is  $\alpha$,  then
$\lambda \cos \alpha \leq  \frac{4}{\pi}  \frac{1- |f(z)|^2}{1- |z|^2}$. Note that  $\Re df_z(h)= du_z(h)$  and  $|du_z(h)|\leq |F'(z)|$.

If we choose $h$ such that  $|f'(z)|= |df_z(h)|$, we get    $ \cos \alpha |f'(z)|\leq |F'(z)|$. Since  $\Hyp_G (Fz)|F'(z)|\leq \Hyp_{\D}(z)$
and   $\Hyp_G (f(z))=\Hyp_G (F(z))= \Hyp_G (\rm {Re}f(z))$  Hence, we have (I).
An application of (I) with  $\alpha =0$ yields  (II).

In particular, if  $G=S(-1,1)$ we have
$$|f'(z)| \cos \alpha \leq  \frac{4}{\pi}  \frac{1- |\rm {Re} f(z)|^2}{1- |z|^2} ,$$
and if   $G=S(0,\infty)$ we have
\be
|f'(z)| \cos \alpha \leq  \frac{2 \rm {Re} f(z)}{1- |z|^2}\,.
\ee

\end{proof}
\section{ Kobayashi-Schwarz lemma  -Several variables}\label{se:new}

\bdefe\label{de:kob}
Let $G$ be bounded connected open subset of complex  Banach space,
$p\in G$  and $\mathbf{v}\in T_p G$.  We define  $k_G(p,\mathbf{v})= \inf\{ |\mathbf{h}|\}$,
where infimum is taking over all $\mathbf{h}\in T_0 \mathbb{C}$ for which
there exists a holomorphic function such that  $\phi: \mathbb{U}
\rightarrow G$ such that $\phi(0)=p$  and $d\phi(\mathbf{h})=\mathbf{v}$.
\edefe
We also use the notation $\Kob_G$ instead of  $k_G$.
We call   $\Kob_G$  Kobayashi-Finsler  norm on tangent bandle.
For some particular domains, we can explicitly compute Kobayashi norm of  a tangent vector by the corresponding angle.

We define the distance function on $G$ by integrating the pseudometric $k_G$: for  $z,z_1 \in G$
\begin{equation}\label{eq:integrated}
    \mathrm{Kob}_G(z,z_1) =\inf_\gamma \int_0^1 k_G(\gamma(t),\dot\gamma(t)) \, dt
\end{equation}
where the infimum is taken  over all piecewise paths $\gamma\colon [0,1]\to G$ with $\gamma(0)=z$ and $\gamma(1)=z_1$.

It is convenient to introduce     $\overline{k}_G=2k_G$.
 By  (\ref{eq:hyp0}), Kobayashi pseudometric $\overline{k}_{\D}$ and the Poncare  metric  coincide on $\D$.

For   complex Banach manifold   $X$  and   $Y$  we denote  by   $\mathcal{O}(X,Y)$  the class of all holomorphic mapping from $X$  into    $Y$.
If $\phi \in \mathcal{O}(\D,X)$ and   $f\in \mathcal{O}(X,Y)$,  then  $\phi\circ f  \in \mathcal{O}(\D,Y)$.

We can express   Kobayashi-Schwarz lemma in geometric form:
\begin{thm}\label{th:kob0}
If $a\in X $   and $b=f(a)$, $u\in T_p X$   and  $u_*= f'(a)u$,
then
\begin{equation}
\mathrm{Kob}(b,u_*)|u_*|_e \leq \mathrm{Kob}(a,u) |u|_e\,.
\end{equation}
\end{thm}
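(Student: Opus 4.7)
The plan is to prove the theorem directly from the infimum defining the Kobayashi pseudonorm, by the one-line argument ``post-compose the disk chart with $f$''. Consistent with Definition~\ref{de:kob} and the comment that $\Kob_G$ is a Finsler norm on the tangent bundle, I read the quantity $\Kob(p,\mathbf{v})|\mathbf{v}|_e$ in the statement as $k_G(p,\mathbf{v})$, the Kobayashi--Finsler norm of $\mathbf{v}$ at $p$ (the Euclidean factor $|\mathbf{v}|_e$ converts the density-style pseudonorm $\Kob$ into an actual length). With this reading the assertion reduces to the Schwarz--Pick-type inequality $k_Y(b,u_*)\le k_X(a,u)$.

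Concretely, I would fix $\varepsilon>0$ and invoke Definition~\ref{de:kob} at $(a,u)$ to produce $\mathbf{h}\in T_0\mathbb{C}$ and $\phi\in\mathcal{O}(\mathbb{U},X)$ with $\phi(0)=a$, $d\phi_0(\mathbf{h})=u$, and $|\mathbf{h}|\le k_X(a,u)+\varepsilon$. Setting $\psi:=f\circ\phi$, composition of holomorphic maps gives $\psi\in\mathcal{O}(\mathbb{U},Y)$ with $\psi(0)=f(a)=b$, and the chain rule gives
\[
d\psi_0(\mathbf{h})\,=\,df_a\bigl(d\phi_0(\mathbf{h})\bigr)\,=\,f'(a)\,u\,=\,u_*.
\]
Thus $(\mathbf{h},\psi)$ is admissible in the infimum that defines $k_Y(b,u_*)$, so
\[
k_Y(b,u_*)\,\le\,|\mathbf{h}|\,\le\,k_X(a,u)+\varepsilon.
\]
Letting $\varepsilon\to 0$ yields $k_Y(b,u_*)\le k_X(a,u)$, which is precisely the claimed inequality $\Kob(b,u_*)|u_*|_e\le \Kob(a,u)|u|_e$.

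I do not foresee any substantial obstacle: the proof rests only on (i) the chain rule for holomorphic maps between complex Banach manifolds, (ii) the defining infimum over disk charts through a prescribed point with a prescribed derivative, and (iii) closure of $\mathcal{O}$ under composition. The one mild point to verify is the transition from the ``bounded domain in a complex Banach space'' framing of Definition~\ref{de:kob} to general complex Banach manifolds $X$ and $Y$; this is handled by the standard localization (holomorphic charts cover $X$ and $Y$, and within a chart the admissible class of maps $\phi$ is visibly non-empty at every point and every tangent direction). Once that is granted, the one-line composition argument above is the entire proof, and indeed it is exactly the universal property which makes $\Kob$ the \emph{largest} distance-decreasing pseudonorm in any Schwarz--Pick system, recovering statement (B) of the introduction.
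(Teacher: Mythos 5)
Your proof is correct: interpreting $\mathrm{Kob}(p,v)\,|v|_e$ as the Kobayashi--Finsler norm $k(p,v)$ of Definition~\ref{de:kob} is the intended reading, and your $\varepsilon$-argument (take an almost-extremal disc $\phi$ through $a$ tangent to $u$, post-compose with $f$, and compare with the defining infimum at $(b,u_*)$) is exactly the standard justification of Theorem~\ref{th:kob0}, including the trivial cases where the admissible family is empty or $u_*=0$. The paper states this theorem without proof, using it as its main tool, so your composition argument simply supplies the expected proof rather than an alternative route.
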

Hence
\begin{thm}[Kobayashi-Schwarz   lemma] \label{th_Earle0}
Suppose that $G$ and   $G_1$  are  bounded connected open subset
of complex Banach space  and  $f: G \rightarrow G_1$  is
holomorphic. Then
\begin{equation}\label{ban0}
\mathrm{Kob}_{G_1}(fz,fz_1)\leq  \mathrm{Kob}_G(z,z_1)
\end{equation}
for all
$z,z_1\in G$.
\end{thm}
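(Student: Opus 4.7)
The plan is to derive the statement from Theorem \ref{th:kob0} (the infinitesimal, Finsler-level contraction) by integrating along a curve and then invoking the length-space definition (\ref{eq:integrated}) of $\mathrm{Kob}_G$. In short, once the infinitesimal form is in hand, the distance inequality is a standard calculus-of-variations reading.

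First I would fix $z,z_1\in G$ and an arbitrary piecewise smooth curve $\gamma:[0,1]\to G$ with $\gamma(0)=z$ and $\gamma(1)=z_1$. Because $f$ is holomorphic, the composition $\widetilde{\gamma}:=f\circ\gamma$ is a piecewise smooth curve in $G_1$ from $fz$ to $fz_1$, and by the chain rule its velocity is $\dot{\widetilde{\gamma}}(t)=df_{\gamma(t)}(\dot\gamma(t))$ wherever defined. Applying Theorem \ref{th:kob0} pointwise with $a=\gamma(t)$, $u=\dot\gamma(t)$, $b=\widetilde{\gamma}(t)$, $u_{*}=\dot{\widetilde{\gamma}}(t)$, and using positive homogeneity $k_G(p,\lambda \mathbf{v})=|\lambda|\,k_G(p,\mathbf{v})$ (which is immediate from Definition \ref{de:kob}, since rescaling the target vector $\mathbf{v}$ forces a proportional rescaling of the admissible $\mathbf{h}$), the content of Theorem \ref{th:kob0} reduces to the clean pointwise contraction
\begin{equation*}
k_{G_1}\bigl(\widetilde{\gamma}(t),\dot{\widetilde{\gamma}}(t)\bigr)\;\le\;k_G\bigl(\gamma(t),\dot\gamma(t)\bigr)
\end{equation*}
at every $t$ of differentiability (and trivially where $\dot\gamma(t)=0$).

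Next, I would integrate this inequality over $[0,1]$. By (\ref{eq:integrated}), $\mathrm{Kob}_{G_1}(fz,fz_1)$ is bounded above by the $k_{G_1}$-length of any admissible curve in $G_1$ joining $fz$ to $fz_1$, so
\begin{equation*}
\mathrm{Kob}_{G_1}(fz,fz_1)\;\le\;\int_0^1 k_{G_1}\bigl(\widetilde{\gamma}(t),\dot{\widetilde{\gamma}}(t)\bigr)\,dt\;\le\;\int_0^1 k_G\bigl(\gamma(t),\dot\gamma(t)\bigr)\,dt.
\end{equation*}
Taking the infimum over all admissible $\gamma$ in $G$ and again invoking (\ref{eq:integrated}) yields $\mathrm{Kob}_{G_1}(fz,fz_1)\le\mathrm{Kob}_G(z,z_1)$, which is the claim.

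The only real obstacle is the translation step: extracting a clean Finsler contraction from the form of Theorem \ref{th:kob0}, which carries extra Euclidean-norm factors $|u|_e$, $|u_*|_e$. Homogeneity of $k_G$ turns that form into the standard infinitesimal contraction $k_{G_1}(b,u_*)\le k_G(a,u)$, after which everything is length-metric bookkeeping. In the Banach-manifold setting one must also verify that $f\circ\gamma$ stays piecewise smooth with a measurable derivative so the integral is well-defined and the infima on the two sides are taken over compatible classes of curves; this follows from the chain rule and continuity of $df$ for holomorphic $f$.
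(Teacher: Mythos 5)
Your argument is correct and is exactly what the paper intends: the paper derives Theorem \ref{th_Earle0} from Theorem \ref{th:kob0} with the single word ``Hence'', i.e.\ by integrating the infinitesimal contraction along curves as defined in (\ref{eq:integrated}), which is precisely your route. Your explicit handling of the homogeneity of $k_G$ and of the curve-composition bookkeeping simply fills in the details the paper leaves implicit.
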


If $\phi$ is a holomorphic  map  of   $\D$ into $G$, we define $L_G u(p,v)=\sup \{\lambda:
\, \phi(0)=p,\,\, d\phi_0(1)=\lambda v \}$, and   $L_G (p,v)=\sup    L_G
u(p,v) $, where the supremum is taken over all maps
$\phi:\D\rightarrow G$ which are  analytic in
$\D$
with  $\phi(0)=p$.  Note that $L_G(p,v) k_G(p,v)=1$.  By Definition \ref{de:kob},
\be\label{de:kob2}
\mathrm{Kob}_G(p,v)= \frac{1}{L_G (p,v)}.
\ee
If $G$ is the
unit ball,  we write  $L_\phi(p,v)$ instead of  $L_G \phi(p,v)$.

For $u\in T_p\mathbb{ C}^n$  we denote by  $|u|_e$  euclidean norm.

\subsection{A new version of  Schwarz lemma for the unit ball}
Using classical Schwarz lemma for the unit disk in $\mathbb{C}$, one can derive:
\begin{prop}[Schwarz lemma 1-the unit ball]\label{pr:Kob00}
Suppose that  (i)  $f\in\mathcal{O}(\mathbb{B}_n,\mathbb{B}_m)$  and $f(0)=0$.
Then   (a) $|f'(0)|\leq 1 $.\\
(b) If $u\in T_0\mathbb{C}^n$   and  $u_*= f'(0)u$, then  $|u_*|_e  \leq |u|_e$.
\end{prop}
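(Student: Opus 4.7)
The plan is to reduce the several-variable statement to the classical one-variable Schwarz lemma (Proposition 2.1) by composing with two linear maps: a slicing embedding of $\D$ into $\B_n$ in the direction $u$, and a linear functional on $\B_m$ determined by the direction of $u_*$. Part (a) will follow from part (b) by taking the supremum over unit vectors, so the real content is (b).

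First I would dispose of the trivial case $u=0$ or $u_*=0$. Otherwise, by homogeneity of the desired inequality $|u_*|_e \le |u|_e$, I may rescale and assume $|u|_e=1$. Define $\phi:\D\to \B_n$ by $\phi(\zeta)=\zeta u$; since $|\phi(\zeta)|_e=|\zeta|<1$, this is holomorphic with $\phi(0)=0$ and $d\phi_0(1)=u$. On the target side, set $\eta := u_*/|u_*|_e \in \C^m$ (a unit vector) and let $\ell(w)=\langle w,\eta\rangle$. Then $\ell$ is a continuous linear functional with $|\ell(w)|\le |w|_e$ by Cauchy--Schwarz, so $\ell$ maps $\B_m$ into $\D$.

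Now form $h:=\ell\circ f\circ\phi:\D\to\D$. It is holomorphic, and $h(0)=\ell(f(0))=0$. Computing the derivative at $0$,
\[
h'(0)=\ell\bigl(f'(0)\,d\phi_0(1)\bigr)=\ell\bigl(f'(0)u\bigr)=\langle u_*,\eta\rangle=|u_*|_e .
\]
The classical Schwarz lemma (Proposition 2.1) applied to $h$ gives $|h'(0)|\le 1$, hence $|u_*|_e\le 1=|u|_e$, which is (b). Part (a) then follows since the operator norm of $f'(0)$ is $\sup_{|u|_e=1}|f'(0)u|_e\le 1$.

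I do not expect any genuine obstacle; the whole argument is essentially a two-sided slicing trick. The only small point to be careful about is the choice of $\eta$: we must pick it after knowing $u_*$, and must handle the $u_*=0$ case separately (where the inequality is trivial). Everything else is a direct application of the classical one-dimensional Schwarz lemma already recorded in Proposition 2.1.
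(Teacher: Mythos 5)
Your proof is correct and is essentially the paper's own argument: both reduce to the one-variable Schwarz lemma by composing a disk slice $\zeta\mapsto \zeta u$ into $\mathbb{B}_n$ with a unit linear functional $\langle\,\cdot\,,\eta\rangle$ on $\mathbb{B}_m$. The only (cosmetic) difference is that the paper takes $\eta$ in the direction of $f(a)$ to get the growth bound $|f(a)|\leq |a|$ first and then deduces (a) and (b) by letting $a\to 0$ along $u$, whereas you take $\eta$ in the direction of $u_*$ and bound the derivative directly.
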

\begin{proof}

Take an arbitrary  $a \in \mathbb{B}_n$ and set $b=f(a)$.
For $z\in \mathbb{U}$ define  $g(z)=<f(z a^*),b^*>$.  Since $g\in {\rm  Hol}(\mathbb{U},\mathbb{U})$,  then  by the unit disk version  of Schwarz lemma,  we find
(i)   $|g (z)|\leq  |z|$, $z\in \mathbb{U}$.  Choose $z_0$   such that  $a=z_0a^* $
An application of (i)  to $z_0$ ,  yields  $|f (a)|\leq  |a|$. It is straightforward that we get  (a) and (b).
\end{proof}
We need  some properties of bi -holomorphic automorphisms  of unit ball  (see \cite{rud2}  for more details).
For a fixed $z$,    $B_z=\{w: (w-z,z)= 0, |w|^2<1 \}$  and denote by $R(z)$   radius of ball $B_z$.
Denote   by  $P_a(z)$ the orthogonal  projection    onto the subspace $[a]$ generated by $a$ and let  $Q_a=I-P_a$
be  the  projection on  the orthogonal complement.
For $z,a \in B^n $     we define
\begin{equation}
\tilde{z}=\varphi_a(z)= \frac{a-Pz -s_a Qz}{1-(z,a)},
\end{equation}
where  $\displaystyle{P_a(z)=\frac{<z,a>}{<a,a>}a}$    and   $s_a= (1-|a|^2)^{1/2}$.
 Set  $U^a= [a] \cap \mathbb{B}$, $Q^b=b+ [a]^\bot \cap \mathbb{B}_n$,  $\varphi_a^1(z)= \frac{a-Pz }{1-(z,a)}$  and   $\varphi_a^2(z)= \frac{-s_a Qz}{1-(z,a)}$.

 Then one can check that\\
(A2) The restriction of   $\varphi_a$   onto  $U^a$ is automorphisam of     $U^a$ and the restriction onto  $B_z$ maps  it bi-holomorphically mapping  onto   $B_{\tilde{z}}$.

Let $u\in T_p\mathbb{C}^n$ and $p\in \mathbb{B}_n$.
If $A=d\varphi_p$, set    $|A u|_e = M(p,u) |u|_e$, ie.
 \begin{equation}
 M(p,u)= \frac{|A u|_e}{|u|_e}\, .
 \end{equation}
\begin{prop}\label{pr:Kob0}
If the measure of the angle between $u\in T_p\mathbb{ C}^n$ and $p\in \mathbb{B}_n$ is  $\alpha=\alpha(p,u)$,  then

\begin{equation}\label{eq:Kob0}
M(p,u)= M_B(p,u)=\sqrt{\frac{1}{s_p^4}\cos^2\alpha + \frac{1}{s_p^2}\sin^2\alpha}.
\end{equation}
\end{prop}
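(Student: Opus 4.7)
The plan is to compute $d\varphi_p$ at the point $p$ directly from the explicit formula and then use the orthogonal decomposition $u = P_p u + Q_p u$ to read off the answer.

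First I would write $\varphi_p(z) = N(z)/D(z)$ with $N(z) = p - P_p z - s_p Q_p z$ and $D(z) = 1 - \langle z, p\rangle$, and observe the key simplifying fact: $N(p) = p - P_p p - s_p Q_p p = p - p - 0 = 0$, since $p\in [p]$ implies $P_p p = p$ and $Q_p p = 0$. Consequently, when we apply the quotient rule at $z = p$, the term involving $dD$ drops out and we are left with
\[
d\varphi_p|_p\, u \;=\; \frac{dN|_p\, u}{D(p)} \;=\; -\frac{1}{1-|p|^2}\bigl(P_p u + s_p Q_p u\bigr) \;=\; -\frac{1}{s_p^2}P_p u - \frac{1}{s_p}Q_p u.
\]

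Next, since $P_p u$ and $Q_p u$ are orthogonal in the Hermitian inner product, the Pythagorean identity gives
\[
|A u|_e^2 \;=\; \frac{1}{s_p^4}\,|P_p u|_e^2 + \frac{1}{s_p^2}\,|Q_p u|_e^2.
\]
Now $P_p$ is the orthogonal projection onto the complex line $[p]$, so if $\alpha = \alpha(p,u)$ is the angle between $u$ and $p$, then $|P_p u|_e = |u|_e\cos\alpha$ and $|Q_p u|_e = |u|_e\sin\alpha$. Substituting and dividing by $|u|_e^2$ yields the stated formula for $M(p,u)$.

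There is no real obstacle here beyond bookkeeping; the main step is noticing that the vanishing of $N$ at $p$ kills the quotient-rule cross term, so that $d\varphi_p|_p$ is simply $-\tfrac{1}{s_p^2}P_p - \tfrac{1}{s_p}Q_p$. Everything else is orthogonal decomposition. If one wanted the derivative at a general point $z\ne p$ the computation would be less pleasant, but the statement concerns $A = d\varphi_p$ at $z=p$ (where $\varphi_p(p)=0$), which is the clean case.
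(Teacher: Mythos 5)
Your proposal is correct, and the endgame (orthogonal decomposition $u=P_pu+Q_pu$, Pythagoras, and $|P_pu|_e=\cos\alpha\,|u|_e$, $|Q_pu|_e=\sin\alpha\,|u|_e$ with $\alpha$ the Hermitian angle to the complex line $[p]$) is exactly what the paper does. Where you differ is in how the two scaling factors are obtained. The paper never differentiates the formula for $\varphi_p$: it splits $A=d\varphi_p$ as $A^1=d\varphi_p^1$, $A^2=d\varphi_p^2$ and gets $|u_1'|_e=|u_1|_e/s_p^2$, $|u_2'|_e=|u_2|_e/s_p$ from the geometric facts in (A2) — the restriction of $\varphi_p$ to the disk $U^p=[p]\cap\mathbb{B}$ is a disk automorphism (so the classical Schwarz--Pick equality gives the factor $1/s_p^2$ at $p$), and the restriction to the orthogonal slice is a biholomorphism of balls of radii $s_p$ and $1$ (giving $1/s_p$). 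You instead compute $d\varphi_p$ at $z=p$ directly by the quotient rule, using $N(p)=p-P_pp-s_pQ_pp=0$ to kill the cross term and obtain
\begin{equation*}
(d\varphi_p)_p=-\frac{1}{s_p^2}P_p-\frac{1}{s_p}Q_p ,
\end{equation*}
which is the standard identity from Rudin's book. Your route is self-contained and arguably cleaner, since it does not lean on the somewhat sketchy (A2)/extremality apparatus; the paper's route buys the geometric slice picture that it reuses later (for the polydisk computation and the Kobayashi--Finsler norm), and it explains conceptually why the two eigen-directions scale by $1/s_p^2$ and $1/s_p$. One small point worth making explicit in your write-up: "the angle between $u$ and $p$" must be read as the Hermitian angle between $u$ and the complex line $[p]$ (so that $|P_pu|_e=|u|_e\cos\alpha$); this is also the convention implicit in the paper's proof, so your argument matches the intended statement.
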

\begin{proof}
Set   $A^k=d\varphi^k_p$ and   $u=u_1 +u_2$,  where  $u_1\in T_pU^p $  and $u_2\in T_pQ^p $, and  $u'_k  = A^k(u_k)$, $k=1,2$.
By the classical Schwarz lemma 2-the unit disk,  Proposition \ref{pr:Kob00} (Schwarz lemma 1-the unit ball)  and (A2),  $|u'_1|_e= |u_1|_e / s_p^2$ and     $|u'_2|_e= |u_2|_e / s_p$. Then $u'=A(u)=u'_1 +u'_2$  and $u'_1$ and $u'_2$ are orthogonal.

Hence, since   $|u_1|_e = \cos\alpha |u|_e$,    $|u_2|_e = \sin\alpha |u|_e$    and   $|u'|_e  = \sqrt{|u'_1|^2_e +|u'_2|^2_e }$,  we find  (\ref{eq:Kob0}).
\end{proof}


It is clear   that
\begin{equation}
\frac{1}{s_p} \leq M(p,u)\leq \frac{1}{s_p^2}.
\end{equation}
Suppose that  (i)  $f\in\mathcal{O}(\mathbb{B}_n,\mathbb{B}_m)$, $a\in \mathbb{B}_n$   and $b=f(a)$, $u\in T_p\mathbb{ C}^n$   and  $u_*= f'(a)u$.

If $A=d\varphi_a$,  $B=d\varphi_b$ then by
Proposition \ref{pr:Kob0},
we find $|A u|_e = M(a,u) |u|_e $  and   $|Bu_*|_e= M(b,u_*)|u_*|_e$.
By Schwarz 1-unit ball,   $|Bu_*|_e \leq |A u|_e $.  Hence

\begin{thm}\label{Schw_sv0}
Suppose that  $f\in\mathcal{O}(\mathbb{B}_n,\mathbb{B}_m)$, $a\in \mathbb{B}_n$   and $b=f(a)$, $u\in T_p\mathbb{ C}^n$   and  $u_*= f'(a)u$.
Then
\begin{equation}
 M(b,u_*)|u_*|_e \leq M(a,u) |u|_e\,.
\end{equation}

\end{thm}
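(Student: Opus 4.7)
The plan is to reduce the inequality to the centered Schwarz lemma for the ball (Proposition \ref{pr:Kob00}) by pre- and post-composing $f$ with the involutive automorphisms $\varphi_a$ and $\varphi_b$ that send $a$ and $b$ to the origin. Concretely, set $g = \varphi_b \circ f \circ \varphi_a \in \mathcal{O}(\mathbb{B}_n, \mathbb{B}_m)$. Since each $\varphi_p$ is an involution of the corresponding ball that swaps $0$ and $p$, the map $g$ satisfies $g(0) = \varphi_b(f(\varphi_a(0))) = \varphi_b(f(a)) = \varphi_b(b) = 0$. Applying Proposition \ref{pr:Kob00} to $g$ then yields $|g'(0) w|_e \leq |w|_e$ for every $w \in T_0 \mathbb{C}^n$.

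Next I would differentiate the composition at the origin. Writing $A = d\varphi_a|_a$ and $B = d\varphi_b|_b$ (so that $|A u|_e = M(a,u)|u|_e$ and $|B u_*|_e = M(b, u_*)|u_*|_e$ by Proposition \ref{pr:Kob0}), the involution identity $\varphi_p \circ \varphi_p = \mathrm{id}$ forces $d\varphi_p|_0 = (d\varphi_p|_p)^{-1}$. Hence by the chain rule,
\[
g'(0) \;=\; d\varphi_b|_b \circ f'(a) \circ d\varphi_a|_0 \;=\; B \circ f'(a) \circ A^{-1}.
\]
Substituting $w = A u$ for the given $u \in T_a \mathbb{C}^n$ converts the centered Schwarz inequality into $|B u_*|_e \leq |A u|_e$, which is the asserted bound once both sides are rewritten through the factor $M$.

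The heart of the argument is the normalization step: the automorphisms $\varphi_a, \varphi_b$ trivialize the inequality by transporting $a$ and $b$ to the origin, where the classical ball Schwarz lemma applies without modification. The only point that demands a little care is distinguishing the differential at $p$ from the differential at $0$; the involutive nature of $\varphi_p$ handles this cleanly through the identity $d\varphi_p|_0 = (d\varphi_p|_p)^{-1}$. All the nontrivial analytic content, in particular the explicit formula for $M(p,u)$ in terms of the angle $\alpha$ and the factor $s_p$, is already packaged in Proposition \ref{pr:Kob0}, so the theorem is essentially a consequence of conjugating the centered Schwarz lemma by ball automorphisms and reading off the resulting Euclidean estimate.
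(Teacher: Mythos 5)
Your proof is correct and follows essentially the same route as the paper: the paper's (terse) argument likewise sets $A=d\varphi_a$, $B=d\varphi_b$, invokes Proposition \ref{pr:Kob0} to write $|Au|_e=M(a,u)|u|_e$ and $|Bu_*|_e=M(b,u_*)|u_*|_e$, and deduces $|Bu_*|_e\leq |Au|_e$ from the centered Schwarz lemma (Proposition \ref{pr:Kob00}) applied to $\varphi_b\circ f\circ\varphi_a$. You merely make explicit the normalization $g=\varphi_b\circ f\circ\varphi_a$, the chain rule, and the involution identity $d\varphi_p|_0=(d\varphi_p|_p)^{-1}$, which the paper leaves implicit.
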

In particular, we have

\begin{thm}[Schwarz lemma 2-unit ball,\cite{Kal_sch2,MMSchw_Kob}]\label{Schw_svKal}
Suppose that  $f\in\mathcal{O}(\mathbb{B}_n,\mathbb{B}_m)$, $a\in \mathbb{B}_n$   and $b=f(a)$.

Then $ s_a^2|f'(a)|\leq s_b $, i.e.  $(1- |a|^2)|f'(a)|\leq  \sqrt{1- |f(a)|^2} $.
\end{thm}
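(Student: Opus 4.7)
The plan is to deduce the estimate directly from the invariant Schwarz inequality in Theorem \ref{Schw_sv0}. That theorem already does the hard work involving the automorphisms $\varphi_a$ and $\varphi_b$ of $\mathbb{B}_n$ and $\mathbb{B}_m$; the present statement should fall out by converting the $M$-estimate into an estimate on the euclidean operator norm $|f'(a)|$, using the two-sided uniform bound on $M(p,u)$ that is recorded immediately after Proposition \ref{pr:Kob0}.

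First I would fix an arbitrary unit tangent vector $u\in T_a\mathbb{C}^n$ and set $u_*=f'(a)u$. Theorem \ref{Schw_sv0} asserts that
\[
M(b,u_*)|u_*|_e \leq M(a,u)|u|_e,
\]
so that $|u_*|_e \leq \bigl(M(a,u)/M(b,u_*)\bigr)|u|_e$. Next I would invoke the elementary inequality $1/s_p \leq M(p,u)\leq 1/s_p^2$ that follows at once from the explicit formula (\ref{eq:Kob0}) of Proposition \ref{pr:Kob0}, taking the upper bound at $p=a$ and the lower bound at $p=b$. This yields
\[
|u_*|_e \leq \frac{s_b}{s_a^2}|u|_e
\]
uniformly over unit vectors $u$. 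Taking the supremum over such $u$ gives the operator norm estimate $|f'(a)|\leq s_b/s_a^2$, i.e.\ $s_a^2|f'(a)|\leq s_b$, which is exactly the claim after inserting $s_a^2=1-|a|^2$ and $s_b=\sqrt{1-|f(a)|^2}$.

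I do not expect any real obstacle, since the work that mattered, namely the application of the one-variable Schwarz lemma along the slices $U^p$ and $B_p$ built from the automorphisms $\varphi_p$, was already carried out to obtain Theorem \ref{Schw_sv0}. The present claim is essentially a corollary obtained by replacing the angle-dependent factors $M(a,u)$ and $M(b,u_*)$ by their worst-case bounds. The only mild subtlety is that the extremal values of $M(p,\cdot)$ are attained in different directions (the maximum at $\alpha=0$, the minimum at $\alpha=\pi/2$), but this is harmless: both bounds are uniform in the tangent direction, so they may be composed without the need to exhibit a single $u$ realizing both extremes simultaneously. The resulting inequality $(1-|a|^2)|f'(a)|\leq\sqrt{1-|f(a)|^2}$ is the expected several-variable analogue of the classical Schwarz--Pick estimate.
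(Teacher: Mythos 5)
Your proposal is correct and follows essentially the same route as the paper: the paper obtains this theorem as the "in particular" case of Theorem \ref{Schw_sv0}, using precisely the two-sided bound $\frac{1}{s_p}\leq M(p,u)\leq \frac{1}{s_p^2}$ recorded after Proposition \ref{pr:Kob0}, with the upper bound at $a$ and the lower bound at $b$. Your write-up merely makes explicit the deduction the paper leaves implicit (including the harmless point about the extremes being attained in different directions), so there is nothing to correct.
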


\begin{thm} Let $a\in \mathbb{B}_n$  and $v \in T_p\mathbb{C}^n$.
For  $\mathbb{B}_n$,
$\mathrm{Kob}(a,v)=M(a,v)|v|_e$.
\end{thm}

\begin{proof}
Let   $\phi$ be   a holomorphic  map  of   $\D$ into $\mathbb{B}_n$,      $\phi(0)=a$,  $v\in T_a C^n$,  $|v|_e=1 $,
$d\phi_0(1)=\lambda v =v'\}$.

$1^\circ$.  Consider first    the case  $a=0$.

Let  $p$ be  the projection on  $[v]$.  Then  $\phi_1= p\circ \phi$  is a holomorphic  map  of   $\D$ into $U^v$.
By classical Schwarz lemma   $|\phi_1'(0)|\leq 1$  and therefore  $\phi^0(\zeta)=v\zeta$ is extremal.
Hence  $\mathrm{Kob}(0,v)=1$.

$2^\circ$.  If  $a\neq 0$, in general  the part of the projection of  $\phi(\D )$ on $[v]$ can    be out of  $\mathbb{B}_n$.So we can not use the procedure
in $1^\circ$ directly  and we  consider   $\phi_a= \varphi_a \circ\phi $  and set  $v_*=  (\varphi_a \circ\phi)'(0)=(d \varphi_a)_a (v')$.

Hence  $|v_*|_e= M(a,v')$  and   by  $1^\circ$,    $|\lambda|M(a,v')\leq 1$.  Hence  the mapping  $\phi^a=\varphi_a \circ\phi^0 $  is  extremal.
\end{proof}

\subsection{polydisk}
For the polydisk, see  \cite{kob}, p.47,
$$\Kob_{\mathbb{U}^n}(z,w)=\max \{\Kob(z_k,w_k): k=1,\cdots,n \} \,. $$

Let  $p=(c,d)\in\mathbb{ U}^2$, $T=(\varphi_c,\varphi_d)$, $A=dT_p$, and  $u\in T_p\mathbb{ C}^2$.

If the measure of the angle between $u\in T_p\mathbb{ C}^2$ and $z_1$-plane
is  $\alpha=\alpha_u=\alpha(p,u)$  and $u'=A(u)$, one can check that(see below)
\begin{equation}\label{eq:poly1}
|u'|_e=M'(p,u)= M'_{\mathbb{U}^2}(p,u)=\sqrt{\frac{1}{s_c^4}\cos^2\alpha + \frac{1}{s_d^4}\sin^2\alpha}\,.
\end{equation}

If $|c|\geq |d|$, then     $$\frac{1}{s_d^2} \leq M'(p,u)\leq \frac{1}{s_c^2}.$$

(III) Now let $\phi (0)=p$,  $u\in T_p\mathbb{C}^2$, $|u|_e=1$,  and  $d\phi_0(1)=\lambda u$ and consider   $T\circ \phi$. If $v= dT_p (u)$, then  $d(T\circ \phi)_0 (1)=\lambda  v $
and   $|v|_e=M'(p,u)|u|_e$. Hence \\
(A3) if $|u|_e=1$,  $M'(p,u) L(p,u)\leq\sqrt{2}$  and $\Kob_{\mathbb{U}^2}(p,u) \geq  M'(p,u)_{\mathbb{U}^2}  |u|_e/\sqrt{2}$.

It turns out that we need an   improvement of  (A3) in order to compute  Kobayashi-Finsler  norm.

Namely, our computation of  Kobayashi-Finsler  norm   on   $\mathbb{ U}^2$  is based on:

(A4)   If  $v=(v_1, v_2) \in T_0\mathbb{C}^2$   and   $|v_1|\geq |v_2|$, then  $\Kob(p,v)= |v_1|=\cos \alpha |v|_e$.
\begin{proof}
Let   $\phi$ be analytic from $\mathbb{U}$  into $\mathbb{U}^2$  and  $\phi (0)=(0,0)$. Then   $\phi_1$  and $\phi_2$  map
$\mathbb{U}$ intoself. Hence    $|d\phi_0(1)|\leq \sqrt{2}$. If  $v=(v_1, v_2) \in T_0\mathbb{ C}^2$   and   $|v_1|\geq |v_2|$, the map  $z\mapsto (zv_1,zv_2)/|v_1|$    shows that  $L(0,v)=1/ |v_1|$    (note that $L(0,v)=|v|/ |v_1| \leq\sqrt{2}$ if    $|v|_e=1$). By equation (\ref{de:kob2}),   $\Kob(p,v)= |v_1|=\cos \alpha |v|_e$.
\end{proof}

Now we check  (\ref{eq:poly1}) and  that \\
(B1)    $|v_1|_e=\cos \alpha |u|_e/s_c^2 $,$|v_2|_e=\sin \alpha |u|_e/s_c^2 $  , and \\
(B2) $k(p,u)= M(p,u)|u|_e$, where     $M(p,u)= M_{\mathbb{U}^2}(p,u)=\max\{\cos\alpha/s_c^2, \sin\alpha/s_d^2\}$.   In particular,  if   $s_d^2/s_c^2 \geq  \tan \alpha$,   then  $k(p,u)= |v_1|_e=\cos \alpha |u|_e/s_c^2 $.\\
Proof.
Set
$A=(A^1,A^2)=dT_p$   and   $u=u_1 +u_2$,  where  $u_1\in T_p[(c,0)]$  and $u_2\in T_p[(0,d)] $, and  $u'_k  = A^k(u_k)$, $k=1,2$.

By the classical Schwarz  lemma 2-the unit disk,  $|u'_1|_e= |u_1|_e / s_p^2$ and     $|u'_2|_e= |u_2|_e / s_p$. Then $u'=A(u)=u'_1 +u'_2$  and $u'_1$ and $u'_2$ are orthogonal.

Hence, since   $|u_1|_e = \cos\alpha |u|_e$,    $|u_2|_e = \sin\alpha |u|_e$    and   $|u'|_e  = \sqrt{|u'_1|^2_e +|u'_2|^2_e }$,  we find (\ref{eq:poly1}),  (B1) and     (B2).
\hfill  $\Box$

Thus  we get
\begin{prop}\label{pr:poly1} Let  $p=(c,d)\in\mathbb{ U}^2$.  If the measure of the angle between $u\in T_p\mathbb{ C}^2$ and $z_1$-plane
is  $\alpha=\alpha_u=\alpha(p,u)$, then
$k_{\mathbb{U}^2}(p,u)=\max\{\cos\alpha/s_c^2, \sin\alpha/s_d^2\}|u|_e$.
\end{prop}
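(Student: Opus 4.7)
The plan is to transport the problem to the origin via the bi-holomorphic automorphism $T=(\varphi_c,\varphi_d)$ of $\mathbb{U}^2$ and then read off the Kobayashi--Finsler norm there from (A4).

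First, since $T$ is a bi-holomorphism of $\mathbb{U}^2$, applying the infinitesimal Kobayashi--Schwarz lemma (Theorem \ref{th:kob0}) to both $T$ and $T^{-1}$ gives the invariance
\[
k_{\mathbb{U}^2}(p,u)\;=\;k_{\mathbb{U}^2}\bigl(T(p),\,dT_p(u)\bigr)\;=\;k_{\mathbb{U}^2}(0,v),
\]
where $v:=dT_p(u)\in T_0\mathbb{C}^2$. This step is the geometric heart of the argument: it reduces an evaluation of the Kobayashi--Finsler norm at an arbitrary point $p$ to an evaluation at the origin, where an explicit formula is available.

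Next I would compute $v=(v_1,v_2)$ componentwise. Because $T$ splits as a product of one-variable maps, $dT_p$ is diagonal, so $|v_1|=|\varphi_c'(c)|\,|u_1|_e$ and $|v_2|=|\varphi_d'(d)|\,|u_2|_e$. The one-variable identity $|\varphi_c'(c)|=1/s_c^2$ (and analogously at $d$), together with the decomposition $|u_1|_e=\cos\alpha\,|u|_e$, $|u_2|_e=\sin\alpha\,|u|_e$ coming from the definition of the angle $\alpha$ between $u$ and the $z_1$-plane, yields
\[
|v_1|_e=\frac{\cos\alpha}{s_c^2}|u|_e,\qquad |v_2|_e=\frac{\sin\alpha}{s_d^2}|u|_e.
\]

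Finally I would invoke (A4), which gives $k_{\mathbb{U}^2}(0,v)=\max(|v_1|_e,|v_2|_e)$. The formulation in (A4) is stated for the case $|v_1|\geq |v_2|$, but the opposite case reduces to it via the coordinate-swap biholomorphism $(z_1,z_2)\mapsto(z_2,z_1)$ of $\mathbb{U}^2$, which preserves $k_{\mathbb{U}^2}$ by Theorem \ref{th:kob0}. Plugging in the expressions for $|v_1|_e,|v_2|_e$ above then produces exactly
\[
k_{\mathbb{U}^2}(p,u)=\max\!\left(\frac{\cos\alpha}{s_c^2},\,\frac{\sin\alpha}{s_d^2}\right)|u|_e,
\]
which is the claim. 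The only genuinely substantive input is (A4), which has already been established; the remaining minor point is the symmetric extension of (A4) to both orderings of $|v_1|,|v_2|$, and I expect no further obstacle since every other step is a direct application of the framework set up earlier in the section.
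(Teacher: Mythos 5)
Your proposal is correct and follows essentially the same route as the paper: transport $u$ to the origin via $T=(\varphi_c,\varphi_d)$, use the one-variable identity $|\varphi_c'(c)|=1/s_c^2$ together with the decomposition $|u_1|_e=\cos\alpha\,|u|_e$, $|u_2|_e=\sin\alpha\,|u|_e$, and then apply (A4) at the origin. Your explicit appeal to Theorem \ref{th:kob0} for the biholomorphic invariance and the coordinate-swap argument for the case $|v_2|>|v_1|$ merely make precise steps the paper leaves implicit.
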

Using  a similar procedure   as in the proof of Proposition \ref{pr:poly1}  one can derive:
\begin{prop}\label{pr:Kob1}
Let  $D$ and $G$ be planar hyperbolic  domains,  $\Omega=D\times G $,   $p=(c,d)\in \Omega $  and  $u\in T_p\mathbb{C}^2$. Then 
$$\Kob(p,u)= M_{\Omega}(p,u) |u|_e,$$ 
where   
$$M(p,u)= M_{\Omega}(p,u)=\max\{\Hyp_D(c)\cos\alpha, \Hyp_G(d)\sin\alpha\}.$$
\end{prop}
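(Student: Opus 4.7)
\textbf{Proof plan for Proposition \ref{pr:Kob1}.} The strategy is to reduce to the polydisk case (Proposition \ref{pr:poly1}) via the universal covering, exactly as Theorem \ref{th:kavugen} is deduced from its disk version. Since $D$ and $G$ are hyperbolic planar domains, we invoke uniformization to obtain holomorphic coverings $\pi_D\colon\mathbb{D}\to D$ and $\pi_G\colon\mathbb{D}\to G$ normalized so that $\pi_D(0)=c$ and $\pi_G(0)=d$. Then $\pi=\pi_D\times \pi_G\colon \mathbb{D}^2\to\Omega$ is a holomorphic covering of $\Omega$, with $\pi(0,0)=p$. The key fact we rely on is that a holomorphic covering is a local isometry for the Kobayashi--Finsler norm: if $w\in T_{(0,0)}\mathbb{C}^2$ is the unique preimage of $u$ under $d\pi_{(0,0)}$, then
\begin{equation*}
\Kob_{\Omega}(p,u)\;=\;\Kob_{\mathbb{D}^2}\bigl((0,0),w\bigr).
\end{equation*}

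The inequality $\Kob_\Omega(p,u)\leq \Kob_{\mathbb{D}^2}((0,0),w)$ follows immediately from Theorem \ref{th:kob0} applied to $\pi$. For the reverse inequality, one uses that any holomorphic test disk $\phi\colon \mathbb{D}\to\Omega$ with $\phi(0)=p$ lifts to a holomorphic $\tilde\phi\colon \mathbb{D}\to \mathbb{D}^2$ with $\tilde\phi(0)=(0,0)$ and $d\pi\circ d\tilde\phi_0= d\phi_0$, because $\mathbb{D}$ is simply connected and $\pi$ is a covering. Hence every candidate competing in the infimum (\ref{de:kob2}) for $\Omega$ lifts to a candidate for $\mathbb{D}^2$ with the same $|h|$, giving the opposite inequality.

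Once this is in place the computation is mechanical. Since $\pi_D,\pi_G$ act coordinatewise, $w=(w_1,w_2)$ with $d\pi_D(0)\,w_1=u_1$ and $d\pi_G(0)\,w_2=u_2$. By (I-2),
\begin{equation*}
|\pi_D'(0)|=\Hyp_{\mathbb{D}}(0)/\Hyp_D(c),\qquad |\pi_G'(0)|=\Hyp_{\mathbb{D}}(0)/\Hyp_G(d),
\end{equation*}
so $|w_1|$ and $|w_2|$ are proportional (with the appropriate hyperbolic densities) to $|u_1|=\cos\alpha\,|u|_e$ and $|u_2|=\sin\alpha\,|u|_e$ respectively. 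Applying Proposition \ref{pr:poly1} at the point $(0,0)\in\mathbb{D}^2$, where $s_0=1$ reduces the expression to $\max\{|w_1|,|w_2|\}$, one arrives at
\begin{equation*}
\Kob_\Omega(p,u)\;=\;\max\bigl\{\Hyp_D(c)\cos\alpha,\;\Hyp_G(d)\sin\alpha\bigr\}\,|u|_e\,,
\end{equation*}
which is the claim.

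The main obstacle is really just step two, the equality (rather than mere inequality) of Kobayashi norms under the covering $\pi$; everything else reduces to what has already been done for $\mathbb{U}^2$. The lifting argument is standard but deserves to be written out carefully because it is what separates the covering case from a generic holomorphic surjection (where only Theorem \ref{th:kob0} applies and one gets only one inequality). Once the lifting is secured, Proposition \ref{pr:poly1} combined with the covering relations (I-2) and the coordinatewise decomposition of $\pi$ yields the formula with no further work.
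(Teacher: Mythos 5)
Your proof is correct, and it differs from the paper's in one essential respect. The paper reduces to $\mathbb{U}^2$ via $T=(\psi^c,\psi^d)$, where $\psi^c,\psi^d$ are conformal mappings of $D$ and $G$ \emph{onto} $\mathbb{U}$ with $\psi^c(c)=\psi^d(d)=0$, asserts $k_\Omega(p,u)=k_{\mathbb{U}^2}(0,v)$ for $v=dT_p(u)$ by biholomorphic invariance, and then performs the same origin computation in $\mathbb{U}^2$ that you use. As written, that argument literally requires $D$ and $G$ to be simply connected, since Riemann maps onto $\mathbb{U}$ need not exist for a general hyperbolic domain; you instead take universal coverings $\pi_D\times\pi_G:\mathbb{D}^2\to\Omega$ and prove the needed equality of Kobayashi--Finsler norms by the two-sided argument (Theorem \ref{th:kob0} for one inequality, lifting of analytic disks through the covering for the other). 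This buys genuine generality --- it covers multiply connected $D$, $G$, which the statement allows --- and it makes explicit the step the paper dismisses with ``one can check''; it is also in the spirit of how the paper itself treats Theorem \ref{th:kavugen}. The endgame is identical: both proofs land on the polydisk computation at the origin (your use of Proposition \ref{pr:poly1} with $s_0=1$, the paper's (A4)). One shared caveat: with the paper's conventions ($\Hyp_{\D}(0)=2$ while $k_{\D}(0,v)=|v|_e$), carrying (I-2) through literally yields $\tfrac12\max\{\Hyp_D(c)\cos\alpha,\Hyp_G(d)\sin\alpha\}\,|u|_e$ for $k_\Omega(p,u)$, so the stated formula corresponds to the normalization $\overline{k}=2k$; the paper's own proof has exactly the same looseness (it writes $2|v_1|_e=\lambda_D(c)|u_1|_e$ and then drops the factor), so this is a convention issue rather than a gap in your argument, but you should fix the constant when writing out the final computation.
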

We can restate this result in the form:\\
(a) If   ${\Hyp_D^2(c)\cos^2\alpha \geq \Hyp_G^2(d)\sin^2\alpha}$, then $\Kob(p,u)= |v_1|:=\cos \alpha |u|_e \Hyp_D(c)$.\\
(b) If   ${\Hyp_D^2(c)\cos^2\alpha \leq \Hyp_G^2(d)\sin^2\alpha}$, then $\Kob(p,u)= |v_2|:=\sin \alpha |u|_e \Hyp_D(c)$.

\begin{proof}
Let  $\psi^c$ and  $\psi^d$  be  conformal
mappings  of $D$ and $G$  onto $\mathbb{U}$ such that $\psi^c(c)= \psi^d(d)=0$  respectively.  If   $T=(\psi^c,\psi^d)$,   $A= dT_p$  and  $v=A(u)$,  one can check that
$k_\Omega(p,u)=k_{\mathbb{U}^2}(0,v)$, $2|v_1|_e= \lambda_D(c) |u_1|_e $  and   $2|v_2|_e= \lambda_G(c) |u_2|_e $. Hence
\begin{equation}
|v|_e= M'(p,u)= M'_{\Omega}(p,u)=\sqrt{\Hyp_D^2(c)\cos^2\alpha + \Hyp_G^2(d)\sin^2\alpha} \,.
\end{equation}
 If    $|v_1|\geq |v_2|$, then  $k_\Omega(p,u)=\lambda_D(c) |u_1|_e$.
\end{proof}
Using Theorem \ref{th:kob0},  Propositions \ref{pr:Kob0} and  \ref{pr:Kob1}, we have
\begin{thm}\label{Schw_svB_2}
Suppose that  $f\in\mathcal{O}(\mathbb{B}_2,\Omega)$, $a\in \mathbb{B}_2$   and $b=f(a)$, $u\in T_p\mathbb{ C}^2$   and  $u_*= f'(a)u$.
Then
\begin{equation}
 \Kob_\Omega(b,u_*)= M_{\Omega}(b,u_*) |u_*|_e  \leq   M_{\mathbb{B}_2}(a,u) |u|_e\,,
\end{equation}
 where  $\Kob_\Omega$ is described in  Proposition \ref{pr:Kob1}.
\end{thm}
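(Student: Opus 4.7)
The plan is to derive this theorem as an immediate consequence of three results already present in the excerpt, with essentially no new computation. The three ingredients are: the infinitesimal Kobayashi--Schwarz lemma (Theorem \ref{th:kob0}), the explicit formula $\Kob_{\mathbb{B}_n}(a,v)=M_{\mathbb{B}_n}(a,v)|v|_e$ for the Kobayashi--Finsler norm on the unit ball (obtained in the excerpt via Proposition \ref{pr:Kob0}), and the analogous product formula $\Kob_\Omega(b,u_*)=M_\Omega(b,u_*)|u_*|_e$ for $\Omega=D\times G$ (Proposition \ref{pr:Kob1}).

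First I would apply Theorem \ref{th:kob0} to the holomorphic map $f\in\mathcal{O}(\mathbb{B}_2,\Omega)$ at the point $a$, with tangent vector $u\in T_a\mathbb{C}^2$ and its push-forward $u_*=f'(a)u\in T_b\mathbb{C}^2$. Read as the statement that the Kobayashi--Finsler pseudonorm does not increase under holomorphic maps, this yields at once the Schwarz-type bound $\Kob_\Omega(b,u_*)\leq \Kob_{\mathbb{B}_2}(a,u)$.

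Next I would rewrite each side using the explicit ball and product formulas. On the source side, the ball formula gives $\Kob_{\mathbb{B}_2}(a,u)=M_{\mathbb{B}_2}(a,u)|u|_e$, with $M_{\mathbb{B}_2}$ as in Proposition \ref{pr:Kob0}, depending on $|a|$ and the angle $\alpha(a,u)$ between $u$ and the radial direction $[a]$. On the target side, Proposition \ref{pr:Kob1}, applied at $b\in\Omega$ with tangent vector $u_*$, gives $\Kob_\Omega(b,u_*)=M_\Omega(b,u_*)|u_*|_e$, with $M_\Omega$ built from the hyperbolic densities $\Hyp_D$ and $\Hyp_G$ at the components of $b$ and the angle between $u_*$ and the $D$-factor. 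Substituting both equalities into the bound from the first step produces exactly the displayed chain $\Kob_\Omega(b,u_*)=M_\Omega(b,u_*)|u_*|_e\leq M_{\mathbb{B}_2}(a,u)|u|_e$.

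The only subtlety, rather than a real obstacle, is notational: the angles entering $M_{\mathbb{B}_2}(a,u)$ and $M_\Omega(b,u_*)$ are geometrically unrelated (one is measured against $[a]$ in $\mathbb{B}_2$, the other against the $D$-factor in the product), so the final inequality is a genuine Schwarz-type constraint linking two independently defined quantities and not a tautology. Since each $M$-factor has already been computed in Propositions \ref{pr:Kob0} and \ref{pr:Kob1}, no additional analytic work enters; the theorem is simply a packaging of those two computations through Theorem \ref{th:kob0}.
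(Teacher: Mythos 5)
Your proposal is correct and follows essentially the same route as the paper, which derives Theorem \ref{Schw_svB_2} precisely by combining the geometric Kobayashi--Schwarz lemma (Theorem \ref{th:kob0}) with the explicit Kobayashi--Finsler norm computations on the ball (Proposition \ref{pr:Kob0}) and on the product $\Omega=D\times G$ (Proposition \ref{pr:Kob1}). Your remark that the two angles are measured against geometrically unrelated directions, so the inequality is not a tautology, is a helpful clarification but does not change the argument.
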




Set   $G=S(a,b)=(a,b)\times \mathbb{R}$,   $-\infty < a < b \leq \infty$.
Note that  $\Hyp_{S(a,b)}(w)= \Hyp_{S(a,b)}({\rm Re}\, w)$,   $w \in S(a,b)$.

If $b< \infty$,  $A(z)= \frac{z-a}{b-a}$ maps conformally  $S(a,b)$   onto $S(0,1)$ and  $B(z)= z-a$
 maps conformally   $S(a,\infty)$  onto  $S(0,\infty)$.

For a vector    $\mathbf{a}=(a_1,a_2,a_3)\in \mathbb{C}^3$,  we define    ${\rm Re}\,
\mathbf{a}=({\rm Re} a_1,{\rm Re} a_2,{\rm Re}a_3)$  and
${\rm Im} \mathbf{a}=({\rm Im} a_1,{\rm Im} a_2,{\rm Im}a_3)$.
Suppose that   $p=(c,d)\in S(a,b)^2 $  and  $u\in T_p\mathbb{C}^2$.\\
We leave the reader to check  that \\
(C1)  $k_{ S(a,b)^2}(p,u)= k_{S(a,b)^2}({\rm Re}p,u)$. \\
(C2)    $k_{S(a,b)^2}({\rm Re}p,{\rm Re}q) \leq k_{S(a,b)^2}(p,q)$,  $p,q\in S(a,b)^2 $.
\begin{thm}\label{Schw_plur}
Let $\underline{u} :\mathbb{B}_2\rightarrow (a,b)^2$  be a pluriharmonic function. Then

\begin{equation}\label{eq:plur1}
\Kob_{S(a,b)^2}(\underline{u}(z),\underline{u}(w))\leq \Kob_{\mathbb{B}_2}(z,w),\quad z,w \in \mathbb{B}_2\,.
\end{equation}
\end{thm}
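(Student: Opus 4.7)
The plan is to lift $\underline{u}$ to a holomorphic map and then apply the Kobayashi--Schwarz lemma already proved in Theorem \ref{th_Earle0}. Write $\underline{u}=(u_1,u_2)$ with each $u_j\colon\mathbb{B}_2\to(a,b)$ real pluriharmonic. Since $\mathbb{B}_2$ is simply connected (indeed star-shaped), every real pluriharmonic function on $\mathbb{B}_2$ admits a pluriharmonic conjugate, so there exist holomorphic $f_j\colon\mathbb{B}_2\to\mathbb{C}$ with $\mathrm{Re}\,f_j=u_j$. Because $u_j$ takes values in $(a,b)$, the map $f_j$ takes values in the strip $S(a,b)$. Setting $F=(f_1,f_2)\colon\mathbb{B}_2\to S(a,b)^2$ gives a holomorphic map with $\mathrm{Re}\,F=\underline{u}$.

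Applying Theorem \ref{th_Earle0} to $F$ yields $\Kob_{S(a,b)^2}(F(z),F(w))\leq\Kob_{\mathbb{B}_2}(z,w)$. Combining this with property (C2) of the preceding paragraph, namely $\Kob_{S(a,b)^2}(\mathrm{Re}\,F(z),\mathrm{Re}\,F(w))\leq\Kob_{S(a,b)^2}(F(z),F(w))$, immediately gives the desired inequality (\ref{eq:plur1}), because $\underline{u}(z)=\mathrm{Re}\,F(z)$.

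The only nontrivial ingredient is (C2), which the author leaves to the reader; I would verify it as follows. Use the product formula $\Kob_{S(a,b)^2}((p_1,p_2),(q_1,q_2))=\max_j\Kob_{S(a,b)}(p_j,q_j)$ to reduce to the one-dimensional statement $\Kob_{S(a,b)}(\mathrm{Re}\,p,\mathrm{Re}\,q)\leq\Kob_{S(a,b)}(p,q)$. This in turn follows from the fact, already noted in the excerpt, that $\Hyp_{S(a,b)}(w)=\rho(\mathrm{Re}\,w)$ depends only on the real part of $w$: for any piecewise smooth curve $\gamma\colon[0,1]\to S(a,b)$ joining $p$ to $q$, the real-part projection $\sigma(t)=\mathrm{Re}\,\gamma(t)$ joins $\mathrm{Re}\,p$ to $\mathrm{Re}\,q$ inside $(a,b)\subset S(a,b)$, and its hyperbolic length is
\[
\int_0^1 \rho(\mathrm{Re}\,\gamma(t))\,|\dot\sigma(t)|\,dt \;\leq\; \int_0^1 \rho(\mathrm{Re}\,\gamma(t))\,|\dot\gamma(t)|\,dt,
\]
which is the hyperbolic length of $\gamma$; taking the infimum over $\gamma$ gives (C2). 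The main conceptual step is thus the existence of the pluriharmonic conjugate on $\mathbb{B}_2$, which is standard, while the main computational step is this length comparison exploiting the strip-symmetry of the hyperbolic density.
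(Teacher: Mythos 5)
Your proposal is correct and follows essentially the same route as the paper: lift $\underline{u}$ to a holomorphic map into $S(a,b)^2$ via pluriharmonic conjugates, apply the Kobayashi--Schwarz lemma (Theorem \ref{th_Earle0}), and conclude with the projection property (C2). Your explicit verification of (C2) via the product formula and the length comparison using the strip-symmetric density $\Hyp_{S(a,b)}(w)=\Hyp_{S(a,b)}(\mathrm{Re}\,w)$ correctly fills in the step the paper leaves to the reader.
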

\begin{proof}Under the hypothesis,  there is an  analytic function  $f :\mathbb{B}_2\rightarrow \mathbb{S}_0^2$
such that ${\rm Re}\,f= {\rm Re}\,\underline{u}$  on   $\mathbb{B}_2$  and  $f
:\mathbb{B}_2\rightarrow (a,b)^2$.

By  Theorem \ref{th_Earle0}  (Kobayashi-Schwarz   lemma) and  (C2)  we have   (\ref{eq:plur1}).
\end{proof}

Let   $p\in  \mathbb{B}_2$  and  $v\in T_p\mathbb{B}^2$. Note that  $df(v)= d{\rm Re}\,f(v)+i d{\rm Im}\,f(v)$.

If the measure of the angle between $v\in T_p\mathbb{C}^2$ and    $x_1x_2$-plane $\beta=\beta(p,u)$,

$|(d{\rm Re}\,f)_p(v)| =  \cos \beta' |df_p(v)| $, where  $\beta'=\beta(p',v_*)$,  $v_*=df_p(v)$  and $p'=f(p)$.

Let $h :\mathbb{B}_2\rightarrow \mathbb{S}_0^2$ be a  pluriharmonic function.
Let $v'=(d{\rm Re}\,h)_p(v)$,  $v_*=dh_p(v)$  and  $p'=h(p)$.

We leave the reader to check  that \\
(D1)   $k_{ S(a,b)^2}(p',v_*)\leq  \tan \beta' k_{\mathbb{B}^2}(p,v)\lambda(p'_2)/\lambda(p'_1)$.

Outline.  Then there is analytic function  $f :\mathbb{B}_2\rightarrow \mathbb{S}_0^2$
such that ${\rm Re}\,f= {\rm Re}\,h$  on   $\mathbb{B}_2$.
Note that  $\Hyp_{\mathbb{S}_0}(w)= \Hyp_{\mathbb{S}_0}({\rm Re}\, w)$,   $w \in \mathbb{S}_0$.


If $I=(-1,1)$ and $J=(0,\infty)$ we can consider  $I^2$, $J^2$  and
$I\times J$.
In a similar way, we can  extend  this result to   pluriharmonic functions   $u :\mathbb{B}_n\rightarrow (a,b)^m$.


\subsection{Invariant  gradient and   Schwarz lemma}
The mapping $w= e^{iz}$  maps  $\mathbb{H}$ onto   the punctured disk.

The Poincare metric on the upper half-plane induces a metric on the punctured disk  $\mathbb{U}'$

    $${\displaystyle ds^{2}={\frac {4}{|q|^{2}(\log |q|^{2})^{2}}}dq\,d{\overline {q}}}, \quad  q\in \mathbb{U}'\,.$$
Hence
$$\displaystyle{{\rm  Hyp}_{\mathbb{U}'}(z)= \frac {-2}{|z|(\log |z|^{2})}}|dz|= \frac {-1}{|z|(\log |z|)}|dz|, \quad z\in \mathbb{U}'\,. $$

Ahlfors \,\cite{ahl1} \,proved a stronger version of Schwarz's lemma and Ahlfors lemma 1.\\
\begin{thm}{\em (\,$Ahlfors \,\,lemma\,\, 2$\,)}.
Let $f$ be an analytic mapping of $\mathbb{D}$ into a region on
which there is given ultrahyperbolic metric $\rho$. Then
$\rho[f(z)]\, |f'(z)|\,\leq \,\lambda \,\,. $
\end{thm}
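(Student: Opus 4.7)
The plan is to run Ahlfors's classical comparison between the pullback of $\rho$ under $f$ and the Poincaré densities of subdisks that exhaust $\mathbb{D}$. Set $\tilde\rho(z) = \rho(f(z))\,|f'(z)|$ on $\mathbb{D}$. Since $f$ is holomorphic, the pullback of any ultrahyperbolic metric is again ultrahyperbolic, so $\tilde\rho$ satisfies $\Delta\log\tilde\rho\ge\tilde\rho^{2}$ wherever it is smooth and positive; at zeros of $f'$ it simply vanishes, and if $f$ is constant the inequality to prove is trivial, so one may assume $\tilde\rho\not\equiv 0$.

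For each $0<r<1$, introduce the Poincaré density of the disk $r\mathbb{D}$,
\[
\lambda_{r}(z)=\frac{2r}{r^{2}-|z|^{2}},
\]
which has constant curvature $-1$, i.e. $\Delta\log\lambda_{r}=\lambda_{r}^{2}$. Consider the ratio $v_{r}(z)=\tilde\rho(z)/\lambda_{r}(z)$ on $r\mathbb{D}$. On any compact subset of $r\mathbb{D}$ the numerator is bounded while $\lambda_{r}(z)\to\infty$ as $|z|\to r$, so $v_{r}$ extends continuously by $0$ to the boundary $|z|=r$. Thus $v_{r}$ attains its supremum at some interior point $z_{0}\in r\mathbb{D}$.

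If $v_{r}(z_{0})>0$ then $\tilde\rho(z_{0})>0$ and $\log v_{r}=\log\tilde\rho-\log\lambda_{r}$ has a local maximum at $z_{0}$, giving $\Delta(\log\tilde\rho-\log\lambda_{r})(z_{0})\le 0$. Combining the ultrahyperbolic inequality for $\tilde\rho$ with the curvature identity for $\lambda_{r}$ yields
\[
\tilde\rho(z_{0})^{2}\ \le\ \Delta\log\tilde\rho(z_{0})\ \le\ \Delta\log\lambda_{r}(z_{0})\ =\ \lambda_{r}(z_{0})^{2},
\]
so $v_{r}(z_{0})\le 1$ and hence $\tilde\rho\le\lambda_{r}$ on $r\mathbb{D}$. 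Letting $r\to 1$, one has $\lambda_{r}(z)\to\lambda(z)=2/(1-|z|^{2})$ locally uniformly, and the desired bound $\rho(f(z))|f'(z)|\le\lambda(z)$ follows for every $z\in\mathbb{D}$.

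The only delicate point is the Laplacian comparison at $z_{0}$: if ultrahyperbolicity is taken only in the supporting sense (upper semicontinuous $\rho$ with smooth supporting metrics from below), one replaces $\tilde\rho$ near $z_{0}$ by a smooth supporting metric $\tilde\rho_{0}$ with $\tilde\rho_{0}(z_{0})=\tilde\rho(z_{0})$ and $\Delta\log\tilde\rho_{0}\ge\tilde\rho_{0}^{2}$, and applies the maximum principle to $\log\tilde\rho_{0}-\log\lambda_{r}$. This is the step that requires care; everything else is direct computation with the explicit density $\lambda_{r}$ and a standard exhaustion.
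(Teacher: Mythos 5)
Your proof is correct and is essentially the classical Ahlfors--Schwarz argument (pull back $\rho$ to $\tilde\rho$ on $\mathbb{D}$, compare with the curvature $-1$ Poincar\'e densities $\lambda_r$ of the exhausting disks $r\mathbb{D}$, handle the interior maximum of the ratio via a smooth supporting metric, then let $r\to 1$), which is precisely the proof in Ahlfors's book \cite{ahl1} that the paper cites; the paper itself states the lemma without proof. The only wording to tighten: to conclude $v_r\to 0$ as $|z|\to r$ you should invoke that $\tilde\rho$ is upper semicontinuous, hence bounded above, on the compact set $\overline{r\mathbb{D}}\subset\mathbb{D}$, rather than boundedness merely on compact subsets of the open disk $r\mathbb{D}$.
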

Let $G$ be open subset of  $\mathbb{B}$,  $f\in C^2(G)$  and   $a \in G$.
Set  $(\tilde{\triangle}) f(a)= \triangle(f\circ \varphi_a)(0)$. Operator  $\tilde{\triangle}$  comutate with  automorphisms of ball and we call it  invariant Laplacian.

For $\lambda \in \mathbb{C}$, we denote by $X_\lambda$ the space of all functions  $f\in C^2(\mathbb{B})$, which satisfy  $\tilde{\triangle}f= \lambda f$. Elements of
$X_0$ we call   $\mathcal{M}$-harmonic  functions.

Recall:
Suppose that  $D$  and $G$  are  hyperbolic planar  domains and  $f$  is  an analytic mapping of $D$ into $G$. Then
${\rm  Hyp}_G(fz) |f'(z)|\,\leq {\rm  Hyp}_D(z)$,  $z\in D$.

If $G\subset \mathbb{C}^n$   and  $f: G\rightarrow C$,
define  $Df(z)=(D_j f(z),...,D_n f(z))$    and   $\tilde{D} f(a)= D(f\circ \varphi_a)(0)$.  For $h\in T_0\mathbb{ C}^n$, set   $u=(d\varphi_a)_0(h) $. Then
$ s_a^2 |h|  \leq  |u|=|(D\varphi_a)_0(h)|\leq  s_a |h|$. Note that
$(d(f\circ \varphi_a))_0(h)= df_a(u)$.

Set  $|Df(z)|=(\sum_{j=1}^n |D_j f(z)|^2)^{1/2}$.
If f(z)=z,  $z\in \mathbb{B}^n$, then  $|Df(z)|=\sqrt{n}$.
Let  $f$  be  complex-valued function defined on $\mathbb{B}^n$.

For   $a  \in  \mathbb{C}^n$,   define  $g(z)=g_a(z)= f(a_1z,..., a_nz)$. Then    $g'(z)=\sum_{k=1}^n  D_k f(za) a_k =df(a)$.

If $f$  is    an analytic mapping of $\mathbb{B}^n$ into  $G$, then
${\rm  Hyp}_G(fz)  |g'(z)|\leq |a|_e $.

Set  $a_k= D_k f(0)/|Df(0)|$, we find  $|g'(0)|= |Df(0)| = |df_0(a)|$.

Hence   $|f'(0)|\leq  |Df(0)|$. Since there is $a$, $|a|=1$, such that   $|f'(0)|= |df_0(a)|$,
$|f'(0)|=  |Df(0)|$.

There is $u \in T_z C^n$  such that   $|Df(z)|= |df_z(u)|$. If $v= (d \varphi)_z (u)$, then
$(d f\circ  \varphi_z)_0 (v)=df_z(u)$. Hence    $|Df(z)|= |df_z(u)|= |(d f\circ  \varphi_z)_0 (v)|\leq |\tilde{D} f(z)| |v|_e $.
Since  $|v|_e \leq   1/s_z^2$, we find  \\
(A)                 $ s_z^2  |Df(z)|\leq |\tilde{D} f(z)|$.

Set $F=f\circ  \varphi_z $.
There is  $v_0 \in T_0 C^n$   such that  $|DF(z)|= |dF_0(v_0)|$ and set   $u_0= (d \varphi)_0 (v_0)$.

Since $|u_0|_e\leq s_z $,    $dF_0(v_0)= df_0(u_0)$, we find    $|df_0(u_0)|\leq |df_0||u_0|\leq s_z |df_0| $. Thus \\
(B)                  $ |\tilde{D} f(z)|\leq s_z  |Df(z)|$.\\
Hence   \\
(C)                 $ s_z^2  |Df(z)|\leq |\tilde{D} f(z)|\leq s_z  |Df(z)|$.

\begin{thm}\label{DyakBloch}
Let   $G$ be   a hyperbolic plane domain   and  let  $f$  be   an analytic mapping of $\mathbb{B}^n$ into $G$,   $a\in \mathbb{B}^n$,  $b=f(a)$, $u\in T_p\mathbb{ C}^n$   and  $u_*= f'(a)u$.
Then
\begin{equation}
{\rm  Hyp}_G(b)|u_*|_e \leq   M_{\mathbb{B}_2}(a,u) |u|_e\,.
\end{equation}
In particular,\\
\rm{(i)} ${\rm  Hyp}_G(fz)  |f'(z)|\leq 1/s_z^2$,\\
\rm{(ii)}    ${\rm  Hyp}_G(fz) |\tilde{D} f(z)|\,\leq 1$.
\end{thm}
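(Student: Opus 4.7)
The plan is to combine the Kobayashi--Schwarz lemma with the two metric computations that have been assembled in the preceding subsections. Applying Theorem \ref{th:kob0} to the holomorphic map $f\in\mathcal{O}(\mathbb{B}^n,G)$ immediately gives
\begin{equation}
\mathrm{Kob}_G(b,u_*)\;\leq\;\mathrm{Kob}_{\mathbb{B}^n}(a,u).
\end{equation}
On the source side the same argument that established Theorem \ref{Schw_svKal} (namely Proposition \ref{pr:Kob0} together with the extremality of $\varphi_a$) identifies $\mathrm{Kob}_{\mathbb{B}^n}(a,u)=M(a,u)|u|_e$. On the target side, since $G$ is a hyperbolic plane domain, the Kobayashi pseudometric coincides with the hyperbolic density: $\mathrm{Kob}_G(b,u_*)=\mathrm{Hyp}_G(b)|u_*|_e$. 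Stringing these three statements together yields the displayed inequality of the theorem.

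For (i), recall that $f$ is scalar valued, so $u_*=df_a(u)\in T_{b}\mathbb{C}$ and $|f'(a)|=\sup_{|u|_e=1}|df_a(u)|$. Choosing a unit vector $u$ which achieves this supremum and invoking the universal bound $M(a,u)\leq 1/s_a^2$ from the paragraph after Proposition \ref{pr:Kob0} gives $\mathrm{Hyp}_G(f(a))|f'(a)|\leq 1/s_a^{2}$. For (ii), set $F=f\circ\varphi_z:\mathbb{B}^n\to G$; since $\varphi_z(0)=z$ we have $F(0)=f(z)$. Applying (i) to $F$ at the origin, where $s_0=1$, produces $\mathrm{Hyp}_G(f(z))|F'(0)|\leq 1$. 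By the identification $|F'(0)|=|DF(0)|=|\tilde{D}f(z)|$ recorded earlier in the paper for scalar-valued holomorphic functions on the ball, this is exactly (ii).

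The one step that needs a separate justification is the identification $\mathrm{Kob}_G(b,u_*)=\mathrm{Hyp}_G(b)|u_*|_e$ for a hyperbolic planar $G$. This is really where the problem lies, since all the rest is bookkeeping. The upper bound comes from the universal covering $\pi:\mathbb{U}\to G$: composing $\pi$ with a suitable automorphism of $\mathbb{U}$ yields a competitor $\phi$ in the infimum defining $k_G$ with $d\phi_0(1)$ pointing along $u_*$, and (I-2) then gives $L_G(b,u_*/|u_*|_e)\geq 1/\mathrm{Hyp}_G(b)$. The matching lower bound is Ahlfors--Schwarz (the Ahlfors lemma 2 cited just before this theorem): for any holomorphic $\phi:\mathbb{U}\to G$ with $\phi(0)=b$ and $d\phi_0(1)=\lambda v$, $|v|_e=1$, one has $\mathrm{Hyp}_G(b)|\lambda|\leq \mathrm{Hyp}_{\mathbb{U}}(0)=2$, which after the normalization $\overline{k}_{\mathbb{U}}=\mathrm{Hyp}_\mathbb{U}$ gives $k_G(b,v)\geq \mathrm{Hyp}_G(b)/2\cdot|v|_e$ with the correct constant. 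Once this equality is in hand the rest of the argument is immediate.
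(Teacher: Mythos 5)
Your overall route is the intended one, and in places it is more complete than the paper's own proof: the paper disposes of the main display and of (i) with the single phrase ``by a version of Schwarz lemma,'' while your reduction of (ii) to (i) via $F=f\circ\varphi_z$, $F(0)=f(z)$, $s_0=1$ and $|F'(0)|=|DF(0)|=|\tilde{D}f(z)|$ is word for word the argument given after the theorem. Assembling the main inequality from Theorem \ref{th:kob0}, the identity $\Kob_{\mathbb{B}_n}(a,u)=M(a,u)|u|_e$, and the identification of the Kobayashi--Finsler norm of a planar hyperbolic domain with its hyperbolic density is exactly the bookkeeping the author leaves implicit, so there is no methodological difference to report.

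The one step you single out as needing justification is, however, also the one place where your constants do not close, and you should repair it before using the ``equality.'' With the paper's normalization (\ref{eq:hyp0}) one has $\Hyp_{\D}(z)=2(1-|z|^2)^{-1}$, whereas Definition \ref{de:kob} gives $k_{\D}(z,v)=(1-|z|^2)^{-1}|v|$; the paper says this explicitly when it introduces $\overline{k}_G=2k_G$. Consequently, for a hyperbolic planar $G$ the covering argument via (I-2) gives $|\pi'(0)|=2/\Hyp_G(b)$, hence $L_G(b,v)\geq 2/\Hyp_G(b)$ and $k_G(b,v)\leq \tfrac{1}{2}\Hyp_G(b)|v|_e$, while your Ahlfors--Schwarz estimate gives the matching lower bound $k_G(b,v)\geq \tfrac{1}{2}\Hyp_G(b)|v|_e$. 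So the correct identification is $k_G=\tfrac{1}{2}\Hyp_G|\cdot|_e$, not $\Kob_G(b,u_*)=\Hyp_G(b)|u_*|_e$ as you assert at the start of that paragraph; your stated bound $L_G(b,u_*/|u_*|_e)\geq 1/\Hyp_G(b)$ tacitly uses $\Hyp_{\D}(0)=1$, i.e.\ a normalization different from the one you invoke two lines later with $\Hyp_{\mathbb{U}}(0)=2$. Tracking the constant honestly, your chain proves $\Hyp_G(b)|u_*|_e\leq 2\,M(a,u)|u|_e$ and hence (i) in the form $\Hyp_G(fz)|f'(z)|\leq 2/s_z^2$ --- which is in fact the constant the paper itself uses in the Dyakonov proposition that follows ($\rho(b)|f'(a)|\leq 2/s_a^2$), and which shows that the theorem's printed inequalities are off by the same factor of $2$ unless $\Hyp$ is silently renormalized to $(1-|z|^2)^{-1}$ on the disk. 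In short: the structure of your proof is sound and coincides with the paper's; the only genuine defect is this internal factor-of-two inconsistency in the identification of $\Kob_G$ with $\Hyp_G$, which you should resolve by fixing one normalization and carrying it through all three displayed conclusions.
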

By a version of Schwarz lemma,   (i) holds.  For $z=0$,  since   $|\tilde{D} f(0)|=  |f'(0)|$, (ii) holds.
An application  of   (ii)  to   the function $F=f\circ  \varphi_z $ at $z=0$,  and the definition of  $\tilde{D} f(z)$,
show that  (ii) holds  in general.

If $G$  is  the punctured disk, we get a Dyakonov result  \cite{Dyak_Bloch}:
\begin{prop}
Suppose that  $f\in\mathcal{O}(\mathbb{B}_n,\mathbb{U}')$, $a\in \mathbb{B}_n$,  $b=f(a)$     and   $\rho={\rm  Hyp}_{\mathbb{U}'}$.
Then  $ \rho(b) |f'(a)|\leq  2/s_a^2$, i.e.    $ (1- |a|^2) |f'(a)| \leq 2  |b| \ln \frac{1}{|b|}$.
\end{prop}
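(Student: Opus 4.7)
The proposition is a direct specialization of Theorem \ref{DyakBloch}(i) to $G=\mathbb{U}'$, so the plan is just to substitute the explicit hyperbolic density of the punctured disk and rearrange.

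First I would record that $\mathbb{U}'$ is a hyperbolic plane domain and that, by the calculation carried out in the paragraph preceding the proposition (using the covering $\pi:\mathbb{H}\to\mathbb{U}'$, $\pi(w)=e^{iw}$, together with the covering invariance (I-2)), its hyperbolic density is
\[
\rho(z)=\Hyp_{\mathbb{U}'}(z)=\frac{-1}{|z|\log|z|}=\frac{1}{|z|\log(1/|z|)},\qquad z\in\mathbb{U}'.
\]

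Next I would apply Theorem \ref{DyakBloch}(i) with this choice of $G$ and with $a\in\mathbb{B}_n$, $b=f(a)$. This yields
\[
\rho(b)\,|f'(a)|\leq \frac{2}{s_a^2}=\frac{2}{1-|a|^2},
\]
which, after multiplying through by $|b|\log(1/|b|)(1-|a|^2)$, is equivalent to
\[
(1-|a|^2)|f'(a)|\leq 2|b|\log\frac{1}{|b|},
\]
exactly the stated inequality.

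The only mild obstacle is keeping normalizations straight: the factor $2$ on the right-hand side tracks the convention $\Hyp_{\mathbb{D}}(z)=2/(1-|z|^2)$ (curvature $-1$), or equivalently $\overline{k}_{\mathbb{D}}=2k_{\mathbb{D}}$, which is the factor feeding the Ahlfors--Schwarz bound $\rho(f(z))|f'(z)|\leq\Hyp_{\mathbb{D}}(z)$ on the disk and then inherited by Theorem \ref{DyakBloch}(i). Once this is pinned down, no further work is needed.
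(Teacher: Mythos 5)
Your proof is correct and is essentially the paper's own argument: the paper presents this proposition as an immediate specialization of Theorem \ref{DyakBloch}(i) to $G=\mathbb{U}'$, exactly as you do, inserting the explicit density $\mathrm{Hyp}_{\mathbb{U}'}(z)=\frac{1}{|z|\log(1/|z|)}$ computed just before the statement and rearranging. Your care with the factor $2$ is warranted, since Theorem \ref{DyakBloch}(i) as printed reads $1/s_z^2$ and matches the claimed bound $2/s_a^2$ only after the normalization $\mathrm{Hyp}_{\mathbb{D}}(z)=2/(1-|z|^2)$ is tracked through the Schwarz--Pick step, as you explain.
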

{\it Acklowedgment}.
We have  discussed the subject at Belgrade Analysis seminar (in 2016) and in particular in connection with minimal surfaces   with  F. Forstneric   and   get useful   information about the subject  via Forstneric  \cite{forst}. We are indebted  to the members of the seminar and to  professor    F. Forstneric for useful   discussions.


\begin{thebibliography}{MM}
\bibitem{ahl1}
L. Ahlfors,  {\it Conformal invariants}, McGraw-Hill Book Company,  1973.
\bibitem{BKr}D.M. Burns and S.G. Krantz,  {\it  Rigidity of holomorphic mappings and a new Schwarz Lemma
at the boundary}, J. Amer. Math. Soc. 7 (1994), 661-676.
\bibitem{Che} D.  Chelts, {\it A generalized Schwarz lemma at the boundary}, Proceedings of the American
Math. Society 129 (2001), No.11, 3275-3278.
\bibitem{dur} {\sc  P. Duren},    {\it Harmonic   mappings in the plane},
Cambridge Univ.   Press,  2004.
\bibitem{Dyak_Bloch}  K. M. Dyakonov,   {\it  Functions in Bloch-type spaces and their moduli},  Ann. Acad. Sci.
Fenn. Math., 41(2):705–712, 2016, arXiv:1603.08140 [math.CV].
\bibitem{forst}  {\it  Communication with F. Forstneric}, 2016.

\bibitem{kavu}  D. Kalaj and M.  Vuorinen, {\it  On harmonic functions and the Schwarz lemma},  Proc. Amer. Math. Soc. 140 (2012), no. 1, 161-165.

\bibitem{Kal_sch2}   D.  Kalaj, {\it  Schwarz lemma for holomorphic mappings in the unit ball},
arXiv:1504.04823v2  [math.CV]  27 Apr 2015

\bibitem{kob}  S. Kobayashi, {\it  Hyperbolic manifolds and holomorphic mappings},
Marcel-Dekker, New York, 1970.
\bibitem{kob1}   S.  Kobayashi, {\it  Hyperbolic Complex Spaces}, Berlin: Springer Nature, (1998), ISBN 3-540-63534-3, MR 1635983
\bibitem{krantz1}  S. G. Krantz,  {\it  The Kobayashi metric, extremal discs, and biholomorphic mappings},
Complex Variables and Elliptic Equations, Volume 57, 2012 - Issue 1
\bibitem{krantz2}  S.  G. Krantz,  {\it  Pseudoconvexity, Analytic
Discs,  and Invariant Metrics},http://www.math.wustl.edu/~sk/indian.pdf

\bibitem{krantz}   S. G. Krantz,  {\it   The Carath\'{e}odory and
Kobayashi Metrics  and Applications in Complex  Analysis},
arXiv:math/0608772v1  [math.CV]  31 Aug 2006
\bibitem{krantz3} S. G. Krantz, {\it  The Schwarz Lemma at the
Boundary},  September 16, 2010
\bibitem{Mar_hyp}M. Markovic, {\it  On harmonic functions and the hyperbolic metric},
 Indag. Math., 26(1):19-23, 2015.


\bibitem{MMSchw_Kob}  M.  Mateljevi\' c,  {\it Schwarz lemma,     Kobayashi Metrics and  FPT}, preprint   November 2016, to appear in Filomat
\bibitem{P_Mel}  P. Melentijevic,  {\it   Invariant  gradient  in refinements  of  Schwarz lemma  and Harnack   inequalities}, manuscript.

\bibitem{rud2} W. Rudin,  {\it Function Theory in the  Unit Ball of $C^n$},
Springer-Verlag, Berlin Heidelberg New York, 1980.

\bibitem{chma} S. H. Chen,  M. Mateljevi\'c, S. Ponnusamy and  X.
Wang,  {\it Schwarz-Pick lemma, equivalent modulus, integral
means and Bloch constant for real harmonic functions},  to appear.

\bibitem{hhChen}   H.  Chen, {\it The Schwarz-Pick lemma and Julia lemma for real planar harmonic mappings},  Sci. China Math.  November 2013, Volume 56, Issue 11, pp 2327-2334
\bibitem{Hboas}  H. Boas,   {\it  Julius and Julia: Mastering the Art of the Schwarz Lemma} - ESIA,  http://www.esi.ac.at
\bibitem{Oss}  R. Osserman, {\it   From Schwarz to Pick to Ahlfors and beyond},
Notices Amer. Math. Soc.46   (1999) 868-873.
\bibitem{Khal}  A. Khalfallah,  {\it  Old and new invariant pseudo-distances defined by pluriharmonic functions},  Jan 2014,  Complex Analysis and Operator Theory,
https://www.researchgate.net/profile/Adel$_-$Khalfallah2
\end{thebibliography}
\end{document}